\newtheorem{Theorem}{Theorem}[section]
\newtheorem{Lemma}[Theorem]{Lemma}
\newtheorem{Corollary}[Theorem]{Corollary}
\newtheorem{Proposition}[Theorem]{Proposition}
\theoremstyle{definition}
\newtheorem{Remark}[Theorem]{Remark}
\numberwithin{equation}{section}
\newcommand{\R}        {\mathbb{R}}
\newcommand{\mL}    {\mathscr{L}}
\newcommand{\D}        {\Delta}
\title[Recovery of Lower Order Non-local Perturbations]{Inverse Problems for the Fractional-Laplacian with Lower Order Non-local Perturbations}
\author[Bhattacharyya, Ghosh and Uhlmann]{S. Bhattacharyya$^{*}$, T. Ghosh$^{\dagger}$, G. Uhlmann$^{\S}$.}
\address {$*$\quad Institute for Advanced Study, 
\newline \indent\quad\ The Hong Kong University of Science and Technology,
\newline \indent\quad\ E-mail:{\tt\  arkatifr@gmail.com}}
\address{$\dagger$\quad Institute for Advanced Study, 
\newline \indent\quad\ The Hong Kong University of Science and Technology,
\newline
\newline \indent\quad\ Department of Mathematics, University of Washington
\newline
\indent\quad\ E-mail:{\tt \  imaginetuhin@gmail.com}}
\address{$\S$\quad Institute for Advanced Study, 
\newline \indent\quad\ The Hong Kong University of Science and Technology
\newline
\newline \indent\quad\ Department of Mathematics, University of Washington
\newline \indent\quad\ E-mail:{\tt \  gunther@math.washington.edu }}
\begin{document}

\begin{abstract}
In this article, we introduce a model featuring a L\'{e}vy process in a bounded domain with semi-transparent boundary, by considering the fractional Laplacian operator with lower order non-local perturbations.
We study the wellposedness of the model, certain qualitative properties and Runge type approximation.
Furthermore, we consider the inverse problem of determining the unknown coefficients in our model from the exterior measurements of the corresponding Cauchy data.
We also discuss the recovery of all the unknown coefficients from a single measurement.
\end{abstract}
\maketitle
\section{Introduction}
\subsection{Model problem and motivation:}
In this article we consider direct and inverse problems concerning a non-local  operator $\mL_{b,q}$, consisting of global and regional non-local operators.
Let $\Omega \subset \R^n$, $n\geq 1$, be a non-empty bounded, Lipschitz domain. Let us consider the operator $\mL_{b,q}$ defined by
\begin{equation}\label{operator}
\mL_{b,q} := (-\D)^t + (-\D)_{\Omega}^{{s}/{2}} \ b (-\D)_{\Omega}^{{s}/{2}} + q,\quad 0<s<t<1,
\end{equation}
where the perturbation coefficients $b$ and $q$ are $L^{\infty}(\Omega)$ functions.
For the sake of simplicity, throughout this article we consider all the functions to be real valued.
The principal part of the operator $\mL_{b,q}$ is given by the fractional Laplacian operator $(-\Delta)^{t}$ of order $2t$ whose non-locality is over the entire $\mathbb{R}^n$. The lower order term contains another non-local operator, commonly referred as the regional fractional Laplacian operator $(-\D)^{s/2}_{\Omega}$, of order $s$, whose non-locality is over $\Omega$. We also add a zero-th order local term defined in $\Omega$.
Observe that we allow any $s\in (0,t)$, so that  we can have non-local perturbations of order ($2s$) as close as we wish to the order ($2t$) of the principal part.
 
Probabilistically, the fractional Laplacian operator $(-\Delta)^t$ represents the infinitesimal generator of a symmetric $2t$-stable L\'{e}vy process in the entire space \cite{AD}.
Here we are interested in the restriction of $(-\Delta)^t$ to a bounded domain $\Omega$.
For example, one can think of the homogeneous Dirichlet exterior value problem for the fractional Laplacian operator $\left(e.g.\, (-\Delta)^tv=g\right.$ $\left.\mbox{in }\Omega\mbox{ and } v=0 \mbox{ in }\mathbb{R}^n\setminus\overline{\Omega}\right)$ which represents the  infinitesimal  generator of a symmetric $2t$-stable L\'{e}vy process for which particles are killed upon leaving the domain $\Omega$ (see \cite{BCVE, AFMR}).
The fractional Laplacian operator in $\mathbb{R}^n$ is defined by
\begin{equation}\label{FTD}
(-\Delta)^t u = \mathscr{F}^{-1} \{\lvert{ {\xi}\rvert}^{2t}  \widehat{u}(\xi) \}, \qquad u \in \mathscr{S}(\mathbb{R}^n).
\end{equation}
Here $\mathscr{S}$ denotes the Schwartz space in $\mathbb{R}^n$ and $\mathscr{F}^{-1}$ denotes the inverse Fourier transform, with the Fourier transform defined by $
\widehat{u}(\xi) = \mathscr{F} u(\xi) = \int_{\mathbb{R}^n} e^{-ix \cdot \xi} u(x) \,dx$. 
The definition \eqref{FTD} is valid for all $t>-\frac{n}{2}$.
For $0<t<1$,
the fractional Laplacian $(-\D)^t$ has an equivalent integral representation given by
\begin{equation*}
(-\D)^t u (x) = C_{n,t}\mbox{ p.v.}\int_{\R^n} \frac{u(x) - u(y)}{\lvert x-y\rvert^{n+2t}} \,dy, \quad x\in \mathbb{R}^n,
\end{equation*}
where $\mbox{p.v.}$ denotes the principal value and $C_{n,t}>0$ is a constant depending only on the dimension $n\geq 1$ and $t \in (0,1)$.

Now we define the regional fractional Laplacian, or censored fractional Laplacian operator, on the domain $\Omega$.
For $0<a<1$ we define $(-\Delta)^{a}_{\Omega}$ on $C^\infty(\overline{\Omega})$ functions as
\begin{equation*}
(-\D)^{a}_{\Omega} u (x) = C_{n,a}\mbox{ p.v.}\int_{\Omega} \frac{u(x) - u(y)}{\lvert x-y\rvert^{n+2a}} \,dy, \quad x\in \Omega.
\end{equation*}
In contrast to the fractional Laplacian, the regional fractional Laplacian $(-\Delta)^{a}_{\Omega}$
represents the infinitesimal generator of a censored $2a$-stable process that is obtained from a symmetric 
$2a$-stable L\'{e}vy process restricted to the domain $\Omega$.
The probabilistic meaning for such a process (and hence the operator) is that it can only jump within the domain. Such process can be also  defined using the Feynman-Kac formula, see \cite{CZX,MRKJ,GM1,GM2}.

We define the operator $\mL_{b,q}$ to be a weighted combination of the global and the regional fractional Laplacian operator giving a large class of processes on domains with semi-transparent boundaries.
For those processes, after hitting the boundary of $\Omega$, a particle can either go outside the domain $\Omega$ or can reflect back into the domain depending on certain parameters.
The coefficient $b(x)$ denotes the transparency or permeability. The vanishing of $b$ in $\Omega$ means that the domain is transparent/permeable, i.e. if $b \equiv 0$ in $\Omega$, then the non-local part of $\mL_{b,q} u (x)$ is $(-\D)^tu(x)$, which makes the process a $2t$-stable L\'evy process in the entire space. 
In other words, $b=0$ implies that the process can jump anywhere in the space $\R^n$ freely.

%There are lots of examples of processes taking place on a domains with semi-transparent boundary viz. diffusion through a cell membrane; Photon diffusion.
%A good example of this kind is semipermeable cell membranes \cite{Terminology,PNAS,PhysRevE,Val,Vazquez2017,Stinga}.
%A semipermeable membrane is a layer that only certain molecules can pass through. 
%Semipermeable membranes can be both biological and artificial. 
%The rate of passage depends on the pressure, concentration, and temperature of the molecules or solutes on either side, as well as the permeability of the membrane to each solute. Depending on the membrane and the solute, permeability may depend on solute size, solubility, properties, or chemistry. %How the membrane is constructed to be selective in its permeability will determine the rate and the permeability.
%Many natural and synthetic materials thicker than a membrane are also semipermeable. One example of this is the thin film on the inside of the egg.
%Artificial semipermeable membranes include a variety of material designed for the purposes of filtration, such as those used in reverse osmosis, which only allow water to pass.

Finally we mention that the study of the operator $\mL_{b,q}$ is in itself of interest mathematically, since it contains two different types of non-locality and has various interesting properties like unique continuation and Runge type approximations.
In the direct problem we prove existence and stability of solutions of $\mL_{b,q}u=F$ in $\Omega$ having the Dirichlet data $u = f$  defined on $\Omega_{e}:=\R^n \setminus \overline{\Omega}$. Here we take $F \in H^{-t}(\Omega)$ and $f \in \widetilde{H}^t(\Omega_e)$ (cf. Section \ref{fsd1}).
%For the uniqueness of solution of the above Dirichlet exterior value problem we make the following assumption.
%\begin{Assumption}\label{eva}
%We assume that $b,q \in L^{\infty}(\Omega)$ is such that
%\begin{equation}\label{eva_1}
%\mL_{b,q}\varphi=0\mbox{ in }\Omega,\quad \varphi=0 \mbox{ in }\Omega_e\quad\mbox{ has only zero solution}.
%\end{equation}
%\end{Assumption}
%Throughout this article, except Remark \ref{Rem_Cauchy}, we will assume that the operator $\mL_{b,q}$ satisfies the above assumption.
Later in this article we use the direct problem for the operator $(-\D)^a_{\Omega}$, which is defined only on $\Omega$ and also has a wellposed Dirichlet exterior value problem on a Lipschitz domain $\mathcal{O} \Subset \Omega$ in the scale of Sobolev spaces.
%We prove the wellposedness of the Dirichlet exterior value problem for the regional fractional operator $(-\D)^a_{\Omega}$ in the scale of Sobolev spaces.%
See \cite{ChenZQ,GM1,GM2} for Dirichlet exterior value problems for the regional fractional Laplacian operator on other function spaces. 
In Section \ref{Sec_direct_prob} we discuss the direct problem in detail and prove existence,
 uniqueness and stability results for the operator $\mL_{b,q}$ and the regional fractional Laplacian operator $(-\D)^a_{\Omega}$.

\subsection{Inverse problems}\label{IP}
We consider the recovery of the coefficients $b$ and $q$ in $\Omega$ from the non-local exterior data $(f, \mathscr{N}_{b,q}(f))$ measured in some open subset of $\Omega_e\times\Omega_e$. %(possibly different open subsets of $\Omega_e$ for two components of the data).
For $f \in \widetilde{H}^t(\Omega_e)$, the non-local Neumann data $\mathscr{N}_{b,q}(f)$ on $\Omega_e$ is defined by
\begin{equation*}
\mathscr{N}_{b,q}(f) := C_{n,t} \int_{\Omega} \frac{u_f(x) - u_f(y)}{|x-y|^{n+2t}}\,dy; \qquad \forall x \in \Omega_e,
\end{equation*}
where $u_f \in H^t(\R^n)$ is the unique solution of the problem $\mL_{b,q}u_f = 0$ in $\Omega$ and $u=f$ on $\Omega_e$ (see Section \ref{Sec_DN_map}).
Let $W,\widetilde{W} \subset \Omega_e$ be any two non-empty open sets. We define the non-local partial Cauchy data corresponding to the operator $\mL_{b,q}$ by
\begin{equation}\label{Cauchy}
\mathcal{C}_{b,q}(W,\widetilde{W}) := \{ (f,\mathscr{N}_{b,q}(f)|_{\widetilde{W}});\, f \in \widetilde{H}^t(W) \}.
\end{equation}

We assume that $b\in L^{\infty}(\Omega)$, $q \in L^{\infty}(\Omega)$ are compactly supported functions and we also assume throughout this paper that they satisfy the following eigenvalue condition
\begin{equation}\label{eva}
\mL_{b,q}\varphi=0\mbox{ in }\Omega,\quad \varphi=0 \mbox{ in }\Omega_e\quad\mbox{ has only the zero solution}.
\end{equation}
For non-negative functions $b$ and $q$ the above eigenvalue condition is satisfied. 
Let $W,\widetilde{W}\subset \Omega_e$ be any two non-empty open subsets of $\Omega_e$. We prove two results (Theorems \ref{Main_Th_1} and \ref{Main_Th}) regarding the recovery of the coefficients $b,q$ in $\Omega$ from the non-local partial Cauchy data $\mathcal{C}_{b,q}(W,\widetilde{W})$.
In the first result (Theorem \ref{Main_Th_1}) we show that measuring $(f, \mathscr{N}_{b,q}(f)|_{\widetilde{W}})$ for all $f \in \widetilde{H}^t(W)$, one can uniquely determine $b$ and $q$ in $\Omega$.
We discuss also how much one can recover from a single measurement of the non-local Cauchy data. We prove that given a single measurement of the Cauchy data $\mathcal{C}_{b,q}(W,\widetilde{W})$ corresponding to a single non-zero $f$, one can determine $b$ and $q$ on certain subsets of $\Omega$.
Moreover, given a single measurement $(f, \mathscr{N}f|_{\widetilde{W}})$, we show that the above subsets of $\Omega$ (where we can recover $b$ and $q$) are optimal in the sense that one cannot conclude anything about $b$ and $q$ outside those subsets from that measurement.
%In the process, we prove an unique continuation principal and Runge type approximation properties for the operator $\mL_{b,q}$ as well as for $(-\D)^a_{\Omega}$ (see Theorem \ref{thm2}).
We state our results for the inverse problems in Theorems \ref{Main_Th_1} and \ref{Main_Th} and prove them in Section \ref{Sec_inv_prob}.

These type of inverse problems are often addressed as Calder\'{o}n type inverse problems. In the standard Calder\'{o}n problem \cite{Calderon1980} the objective is to determine the electrical conductivity of a medium from voltage and current measurements made on its boundary.
Study of the inverse boundary value problems have a long history, in particular, in the context of electrical impedance tomography; it has applications on seismic and medical imaging, as well as to inverse scattering problems.
We refer to \cite{UG} and the references therein for a survey of this topic.

The study of Calder\'{o}n type inverse problems for non-local operators began with the article \cite{GSU}, where the authors address the inverse problem of determining the potential $q$ in the fractional Schr\"{o}dinger operator $\left((-\Delta)^t+q(x)\right)$, $0<t<1$, in $\Omega$ from the corresponding Dirichlet to Neumann map in the exterior domain $\Omega_e$. 
In \cite{RS1} the authors study stability estimates for recovering the potential $q$. Later it has been shown that with a single measurement of $(f,\mathscr{N}_{b,q}(f))$ for non zero $f$, it is possible to recover and reconstruct the potential $q$ in $\Omega$ (see \cite{GRSU}). 
The problem of recovering $q$ for the anisotropic fractional elliptic operators has been considered in \cite{GLX}.

\subsubsection{All Measurements}
In this article we are interested in determining two unknowns $b$ and $q$ in the perturbed non-local operator $\mL_{b,q}$ where in addition  to  the zeroth order perturbation we have a $2s$-order non-local perturbation to the  fractional Laplacian operator of order $2t$,  $0<s<t<1$.
Our result for all measurements is:
\begin{Theorem}\label{Main_Th_1}
Let $\Omega \subset \R^n$, $n\geq 1$, be a bounded Lipschitz domain. Let $b_1,b_2,q_1,q_2\in L^{\infty}(\Omega)$ compactly supported in $\Omega$ be such that Assumption \eqref{eva} is satisfied for $\mL_{b_k,q_k}$, $k=1,2$.
Let $W, \widetilde{W} \subset \Omega_e$ be any two non-empty open subsets.\\
\noindent If $\mathcal{C}_{b_1,q_1}(W,\widetilde{W}) = \mathcal{C}_{b_2,q_2}(W,\widetilde{W})$, then $q_1=q_2$ and $b_1=b_2$ in $\Omega$.
\end{Theorem}

\subsubsection{Single measurement}
We also consider the problem of recovering $b$ and $q$ on suitable subsets of $\Omega$ subject from the non-local Cauchy data $\mathcal{C}_{b_2,q_2}(W,\widetilde{W})$ for only a single non-zero $f \in \widetilde{H}^t(\Omega_e)$.
Let us fix a non-zero $f \in \widetilde{H}^t(\Omega_e)$ and let $u_f$ be the unique solution of the problem $\mL_{b,q}u_f=0$ in $\Omega$ and $u_f|_{\Omega_e}=f$.
If $b \equiv 0$ in $\Omega$, a single non-zero measurement of the Cauchy data is enough to determine $q$ in $\Omega$ (see \cite{GRSU}).
If $b\not\equiv 0$ in $\Omega$ and $u_f = 0$ in some non-empty open subset $E \subset \Omega$, then we cannot conclude anything about $q$ in $E$.
More precisely, if $b \not\equiv 0$ in $\Omega$ and there exists a non-empty open set $E\subset \Omega$ such that $u_f=0$ in $E$, then for any $\varphi \in C_c(E)$
\begin{equation}\label{invariance}
\mL_{b,q}u_f = 0 \quad \mbox{in } \Omega \ \implies \mL_{b,(q+\varphi)}u_f = 0 \quad \mbox{in } \Omega,
\end{equation}
with $\mathcal{C}_{b,q}(W,\widetilde{W}) = \mathcal{C}_{b,q+\varphi}(W,\widetilde{W})$. 
Therefore, it is impossible to recover $q$ on $E$ from the single measurement $(f,\mathscr{N}_{b,q}(f)|_{\widetilde{W}})$.
Similarly, if $(-\D)^{s/2}_{\Omega}u_f = 0$ in some non-empty open subset $F \subset \Omega$, then it is impossible to recover $b$ on $F$ from the single measurement  $(f,\mathscr{N}_{b,q}(f)|_{\widetilde{W}})$.
Therefore, the optimal result would be  to recover $b$ and $q$ on the support of $(-\D)^{s/2}_{\Omega}u_f$ and $u_f$ respectively. 
%In Theorem \ref{Main_Th} we formally state this result.
%
%It is interesting to observe that even one measurement of the Cauchy data can determine two unknown parameters simultaneously.
%If for some $f\in \widetilde{H}^t(\Omega)$, the supports of $b$ and $q$ are contained in the support of $(-\D)^{s/2}_{\Omega}u_f$ and $u_f$ in $\Omega$ respectively, then the complete recovery of $b$ and $q$ follows from that single measurement. 
%If not, all the measurements (i.e. varying $f$ in $\widetilde{H}^t(W)$) provides the recovery of the unknown coefficients $b,q$. In addition, two suitable measurements  $(f_l, \mathscr{N}_{b,q}u_{f_l})$, $l=1,2$ can determine the coefficients completely in $\Omega$ provided $u_{f_l}$ and $(-\D)^{s/2}_{\Omega}u_{f_l}$ have disjoint zero sets corresponding to $f_1$ and $f_2$.
Our result for a single measurement is:

\begin{Theorem}\label{Main_Th}
Let $\Omega \subset \R^n$, $n\geq 1$, be a bounded Lipschitz domain and $b_1,b_2,q_1,q_2\in C_c(\Omega)$ continuous functions with compact support inside $\Omega$.
We assume $\mL_{b_1,q_1}$, $\mL_{b_2,q_2}$ are such that the assumption \eqref{eva} is satisfied.
Let $f \in \widetilde{H}^t(\Omega_e)$ be a fixed non-zero function and $(u_f)_j\in H^t(\mathbb{R}^n)$ solves $\mathscr{L}_{b_j,q_j}(u_f)_j = 0$ in $\Omega$ with $(u_f)_j = f$ in $\Omega_e$ for $j=1,2$.\\
\noindent If $\mathscr{N}_{b,q}(f)_1 = \mathscr{N}_{b,q}(f)_2$ on $\widetilde{W}$, where $\widetilde{W} \subset \Omega_e$ is some nonempty open subset, then $(u_f)_1 \equiv (u_f)_2$ in $\R^n$.
Moreover, we have $q_1=q_2$ on the support of $(u_f)_1$ and $b_1=b_2$ on the support of $(-\D)^{s/2}_{\Omega} (u_f)_1$ in $\Omega$.
\end{Theorem}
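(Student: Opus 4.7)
\noindent\textbf{Proof proposal for Theorem \ref{Main_Th}.} My plan is to split the argument into two stages. First, I will use the unique continuation property (UCP) for the classical fractional Laplacian $(-\D)^t$ to upgrade the matching Cauchy data on $\widetilde W$ to the stronger identity $(u_f)_1 \equiv (u_f)_2$ throughout $\R^n$. Second, I will subtract the two PDEs and apply the strong unique continuation property (SUCP) of the regional fractional Laplacian from Theorem~\ref{thm2}(1) on a small ball inside $\Omega$ where $b_1-b_2$ and $q_1-q_2$ both vanish identically by virtue of their compact support.

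Set $w := (u_f)_1 - (u_f)_2 \in H^t(\R^n)$; then $w \equiv 0$ on $\Omega_e$. For any $x \in \widetilde W \subset \Omega_e$, since $w(x) = 0$ and $w$ vanishes throughout $\Omega_e$, the singular integral for $(-\D)^t$ collapses to
\[
(-\D)^t w(x) \;=\; -C_{n,t}\int_{\Omega}\frac{w(y)}{|x-y|^{n+2t}}\,dy,
\]
while the equality $\mathscr{N}_{b_1,q_1}(f)(x) = \mathscr{N}_{b_2,q_2}(f)(x)$, combined with $(u_f)_1(x) = (u_f)_2(x) = f(x)$, reduces precisely to $\int_\Omega w(y)|x-y|^{-(n+2t)}\,dy = 0$. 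Hence $w$ and $(-\D)^t w$ both vanish on the open set $\widetilde W$, and the UCP for $(-\D)^t$ from \cite{GSU} forces $w \equiv 0$ on $\R^n$. Denote the common solution by $u := (u_f)_1 = (u_f)_2$ and set $v := (-\D)^{s/2}_\Omega u$.

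Subtracting the two equations $\mL_{b_j,q_j}u = 0$ in $\Omega$ and writing $\tilde b := b_1 - b_2$, $\tilde q := q_1 - q_2$ gives the difference identity
\[
(-\D)^{s/2}_\Omega(\tilde b\, v) + \tilde q\, u \;=\; 0 \quad \text{in } \Omega.
\]
Since $\tilde b,\tilde q \in C_c(\Omega)$, the set $K := \mathrm{supp}(\tilde b) \cup \mathrm{supp}(\tilde q)$ is compactly contained in $\Omega$, so there exists a nonempty open ball $\mathcal{O}$ with $\overline{\mathcal{O}} \subset \Omega \setminus K$. On $\mathcal{O}$ both $\tilde b$ and $\tilde q$ vanish identically, so $\tilde b\, v \equiv 0$ on $\mathcal{O}$ and the identity forces $(-\D)^{s/2}_\Omega(\tilde b\, v) = -\tilde q\, u \equiv 0$ on $\mathcal{O}$. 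Applying Theorem~\ref{thm2}(1) to $\tilde b\, v$ yields $\tilde b\, v \equiv 0$ in $\Omega$, and continuity of $\tilde b$ then gives $\tilde b = 0$ on $\mathrm{supp}(v) \cap \Omega$. Substituting this back, the identity collapses to $\tilde q\, u = 0$ in $\Omega$, and continuity of $\tilde q$ yields $\tilde q = 0$ on $\mathrm{supp}(u) \cap \Omega$.

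The main technical obstacle I anticipate is verifying the Sobolev regularity of $\tilde b\, v$ needed to apply Theorem~\ref{thm2}(1). One expects $v \in H^{s/2}(\Omega)$ since $u \in H^t(\R^n)$ with $t > s$, but multiplication by the merely continuous function $\tilde b$ does not in general preserve $H^{s/2}$; this would either require sharpening the SUCP to a weaker function space or exploiting additional smoothness of $b_j$ available in the problem setting. Modulo this point, the structure of the proof is quite clean: all the inverse-problem content is encoded in a single Sobolev-space UCP applied to $\tilde b\, v$, with the role of the compact support hypothesis being to produce an open set where both coefficient differences vanish simultaneously.
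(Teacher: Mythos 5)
Your first stage is sound and coincides with the paper's argument: matching Neumann data on $\widetilde W$ gives $w=(-\D)^t w=0$ there (your direct computation from \eqref{nd} is equivalent to the paper's use of Proposition \ref{ln} and \eqref{lm}), Proposition \ref{sucp} yields $(u_f)_1\equiv(u_f)_2$, and the difference identity you write is exactly \eqref{int_id_1}. The genuine gap is in your second stage, where you apply Theorem \ref{thm2}(1) to $\tilde b\,v$ with $v=(-\D)^{s/2}_\Omega u$. That SUCP is stated and proved (Lemma \ref{sucpl}) for functions in $H^{a}(\Omega)$, and $\tilde b\,v$ is not known to have regularity $H^{s/2}(\Omega)$. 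In fact even $v$ itself need not: since $u-f$ (with $f$ extended by zero into $\Omega$) lies in $\widetilde H^t(\Omega)$, Lemma \ref{rev3} gives only $v\in H^{t-s}(\Omega)$, and $t-s$ can be far below $s/2$ (e.g.\ $s=0.8$, $t=0.9$), so your expectation ``$v\in H^{s/2}(\Omega)$ because $t>s$'' is already incorrect; multiplying by the merely continuous $\tilde b$ degrades this further, leaving only $\tilde b\,v\in L^2(\Omega)$. Your proposed repair via ``additional smoothness of $b_j$ available in the problem setting'' is not available: the hypothesis is only $b_j\in C_c(\Omega)$. Hence the decisive step invokes Theorem \ref{thm2}(1) outside its hypotheses. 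The statement you would actually need --- an SUCP for $L^2(\Omega)$ functions whose regional Laplacian is defined only distributionally through the symmetric form --- is plausible (the proof of Lemma \ref{sucpl} proceeds by zero extension and Proposition \ref{sucp} holds in $H^{-r}(\R^n)$ for any $r>0$), but it is not proved in the paper and you do not prove it, so as written the argument does not close.

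This is also exactly where the paper's route differs, and the difference is designed to avoid the obstacle you hit: the paper never applies $(-\D)^{s/2}_\Omega$ to the product $\tilde b\,v$. It works with the weak identity \eqref{int_id_2}, takes $\mathcal{O}\Subset\Omega$ \emph{containing} $\mathrm{supp}\,\tilde b\cup\mathrm{supp}\,\tilde q$ (you instead choose a ball avoiding the supports), kills the $b$-term by testing with functions satisfying $(-\D)^{s/2}_\Omega\varphi=0$ in $\mathcal{O}$ and uses the Runge density of such solutions (Lemma \ref{runge_0}) to conclude $\tilde q\,u=0$; it then recovers $\tilde b\,v=0$ by testing with the solution of the exterior-value problem $(-\D)^{s/2}_\Omega\varphi=\tilde b\,v$ in $\mathcal{O}$, $\varphi=0$ in $\Omega\setminus\mathcal{O}$ (Proposition \ref{existence_th2}), for which $\tilde b\,v\in L^2(\mathcal{O})\subset H^{-s/2}(\mathcal{O})$ suffices. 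In other words, only $L^2$ control of $\tilde b\,v$ is ever required. To keep your shorter SUCP-on-the-product argument you must first establish the $L^2$ version of the regional SUCP; otherwise you should switch to the Runge-approximation/Dirichlet-problem route of Section \ref{sec5}.
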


\noindent The remainder of this paper is organized as follows:
In Section \ref{Sec_2} we discuss the direct problem for the regional fractional Laplacian operator $(-\D)^a_{\Omega}$ and the non-local operator $\mL_{b,q}$. We first define the fractional order Sobolev spaces on $\R^n$, as well as on a non-empty open domain. 
%Following that we extend the global and the regional fractional Laplacian operators to the respective fractional order Sobolev spaces.%
Then we study wellposedness of the Dirichlet exterior value problem for the regional fractional Laplacian operator acting on fractional order Sobolev spaces.
%Here we mention that we also discuss wellposedness of a Dirichlet boundary value problem for the regional fractional Laplacian operator in Appendix \ref{d1}.
In the rest of the section we discuss wellposedness of the Dirichlet exterior value problem for the non-local operator $\mL_{b,q}$ and finally, we finish our discussion on direct problems by defining the non-local Cauchy data corresponding to the operator $\mL_{b,q}$.
In Section \ref{sec3} we prove theorems \ref{Main_Th_1} and \ref{Main_Th}.
Having the equality of the non-local Cauchy data corresponding to two non-local operators $\mL_{b_k,q_k}$, for $k=1,2$, and the unique continuation principle for the fractional Laplacian operator $(-\D)^t$ (c.f. Proposition \ref{sucp}), we derive an integral identity \eqref{int_id_2} relating the perturbation coefficients $b_k$ and $q_k$, for $k=1,2$.
Next we discuss the unique recovery of the coefficients from the integral identity \eqref{int_id_2}.
In order to do that we prove unique continuation (Lemma \ref{sucpl}) and a Runge type approximation property (Lemma \ref{runge_0}) for the regional fractional Laplacian operator. Using these we complete the proof of Theorem \ref{Main_Th}.
Finally we prove a Runge type approximation (Lemma \ref{sqml}) for the non-local operator $\mL_{b,q}$ and using that we complete the proof of Theorem \ref{Main_Th_1}.

\section{Direct Problems}\label{Sec_2}
In this section we study the direct problems for the operators $(-\D)^{a}_{\Omega}$ and $\mL_{b,q}$.
For simplicity we assume that all functions including $b$ and $q$ to be real valued.
Let us start with recalling the definition of the fractional order Sobolev spaces $H^r(\R^n)$, for $r \in \R$.
We define the space $H^r(\R^n)$ as
\begin{equation}\label{def1}
H^r(\R^n) := \{ u \in \mathscr{S}^\prime(\R^n) ; \langle \xi \rangle^{r} \widehat{u}(\xi) \in L^2(\R^n) \}, \quad \forall r\in \mathbb{R};
\end{equation}
equipped with the norm $\lVert u \rVert_{H^r(\R^n)} := \lVert \langle \xi \rangle^r \hat{u}(\xi) \rVert_{L^2(\R^n)}$,
where $\mathscr{S}^\prime(\R^n)$ denotes the space of tempered distributions, $\langle \xi \rangle = (1 + \lvert \xi \rvert^2)^{\frac{1}{2}}$ and $\widehat{\cdot}$ denotes the Fourier transform.
The space $H^{-r}(\R^n)$, for $r>0$, can also be realized as the dual of the fractional order Sobolev space $H^r(\R^n)$, i.e. $H^{-r}(\R^n) = \left(H^r(\R^n)\right)^*$.
 
Here we also provide an equivalent definition of the fractional order Sobolev spaces that does not use the Fourier transform as in \eqref{FTD}.
For $r\in (0,1)$ one can equivalently define the space $H^r(\R^n)$ as
\begin{equation}\label{key}
H^{r}(\R^n) = \{ u \in L^2(\R^n) ; \frac{u(x) - u(y)}{\lvert x-y\rvert^{\frac{n}{2}+r}} \in L^2(\R^n \times \R^n) \},
\end{equation}
with the well-known Aronszajn-Slobodeckij inner product, given for real-valued $u,v$ by (\cite{AF}):
\begin{equation*}
\langle u,  v\rangle_{H^r(\R^n)}
= \int_{\R^n} u(x)v(x)\,dx + \int_{\R^n} {\int_{\R^n} {\frac{(u(x) - u(y))(v(x) - v(y))}{\left|x-y\right|^{n+2r}} \, dy} \, dx},
\end{equation*}
for all $u,v\in H^{r}(\mathbb{R}^n)$.
Following that, we can assign the graph norm on $H^r(\mathbb{R}^n)$ by
\begin{equation*}
\lVert u \rVert^2_{H^r(\R^n)} = \lVert u \rVert^2_{L^2(\R^n)} + \left\lVert \frac{u(x) - u(y)}{\left|x-y\right|^{\frac{n}{2}+r}} \right\rVert^2_{L^2(\R^n\times\R^n)}.
\end{equation*}

%By that we mean for $F \in H^{-r}(\R^n)$ there exist a $f,g \in L^2(\R^n)$ such that
%\begin{equation*}
%\begin{aligned}
%&\langle F, \varphi \rangle_{(H^{-r}(\R^n),H^{r}(\R^n))} 
%= \langle g, \varphi \rangle_{L^2(\R^n)} + \langle f, (-\Delta)^{r/2}\varphi \rangle_{L^2(\R^n)}\\
%&= \int_{\R^n}g(x)\varphi(x)\, dx + \int_{\R^n} {\int_{\R^n} {\frac{(f(x) - f(y))(\varphi(x) - \varphi(y))}{\left|x-y\right|^{\frac{n}{2}+r}} \, dy} \, dx},\quad \forall\varphi \in \mathscr{S}(\R^n).
%\end{aligned}
%\end{equation*}

\subsection{Fractional Laplacian operator}
Let $0<t<1$, we have defined the fractional Laplacian $(-\Delta)^t$ in $\mathbb{R}^n$ for Schwartz class functions in \eqref{FTD}.
%\begin{equation*}
%\forall x\in\mathbb{R}^n,\quad(-\Delta)^a u(x) = \mathscr{F}^{-1} \{ \lvert{\xi}\rvert^{2a} \widehat{u}(\xi) \}, \quad u \in %\mathscr{S}(\mathbb{R}^n).
%\end{equation*}
Note that, $(-\Delta)^{t}u$ is not a Schwartz class function due to its lack of decay near infinity, in particular, $(-\Delta)^{t}u$ decays at infinity as $|x|^{-n-2t}$ (see \cite{LNS}).
The operator $(-\D)^t$ satisfies the following integration by parts formula on $\mathbb{R}^n$ 
in the $L^2$ sense (i.e. $(-\Delta)^t u\in L^2(\mathbb{R}^n), \, u\in\mathscr{S}(\mathbb{R}^n)$ for $0<t<1$) 
given by 
\begin{equation}\label{int_by_parts}
\int_{\mathbb{R}^n} \left((-\Delta)^t u\right)\, v\, dx
=  \int_{\mathbb{R}^n} \left((-\Delta)^{t/2} u\right)\, \left((-\Delta)^{t/2}v\right)\, dx,\quad \forall  u,v\in \mathscr{S}(\mathbb{R}^n).
\end{equation}
Therefore,
\begin{equation}
\int_{\mathbb{R}^n} \left((-\Delta)^t u\right)\, v\, dx
=  \int_{\mathbb{R}^n} \left((-\Delta)^{t} v\right)\, u\, dx,\quad \forall  u,v\in \mathscr{S}(\mathbb{R}^n).
\end{equation}
There are several equivalent definitions of the fractional Laplacian, see \cite{KWM}. For instance, it can be defined using the principal value integral as  ($0 < t< 1$)
\begin{equation*}
\begin{aligned}
(-\Delta)^t u(x) &= C_{n,t} \ \mathrm{p.v.} \int_{\mathbb{R}^n} \frac{u(x)-u(y)}{\lvert{x-y}\rvert^{n+2t}} \,dy\\
&= C_{n,t}\lim\limits_{\epsilon \to 0^+} \int_{\mathbb{R}^n\setminus B(x,\epsilon)} {\frac{u(x)-u(y)}{\left|x-y\right|^{n+2t}} dy}, 
\end{aligned}
\end{equation*}
where $C_{n,t}$ is a constant given by $\frac{4^{t}\Gamma(\frac{n}{2}+t)}{\pi^{n/2}\Gamma(-t)}$ (see \cite{Hitchhiker}), and $B(x,\epsilon)$ is a ball in $\mathbb{R}^n$ centered at $x$ with radius $\epsilon>0$.
The difference $u(x) -u(y)$ in the numerator, which vanishes at the singularity of the kernel, provides a regularization. This together with the averaging of positive and negative parts allows the principal value to exist at least for smooth $u$ with sufficient decay.
However, when $t \in (0,\frac{1}{2})$, the integral is not singular near $x$.
Indeed, for $u \in \mathscr{S}(\mathbb{R}^n)$ and $0<t<\frac{1}{2}$, we have
\begin{align*}
&\left| \int_{\mathbb{R}^n} {\frac{u(x)-u(y)}{\left|x-y\right|^{n+2t}} dy} \right|\\
& \leq 2\left(\| \nabla u \|_{L^\infty} \int_{\overline{B}(x, 1)} {\frac{dy}{\left|x-y\right|^{n+2t-1}}} + \| u \|_{L^\infty} \int_{\mathbb{R}^n \setminus \overline{B}(x, 1)} {\frac{dy}{\left|x-y\right|^{n+2t}}}\right),
\end{align*}
and both of the integrals in the right hand side are finite. Note that here we have used only $C^1$ regularity and the boundedness of the gradient of $u$. 
Moreover, by using the $C^2$ regularity and boundedness of the second order derivatives as well, in general for $t \in (0,1)$ we can  write the fractional Laplacian as a standard Lebesgue integral given by (see \cite{BCVE})
\begin{align*}
%&\forall x\in\mathbb{R}^n,\\
&\left(- \Delta \right)^{t}u(x) = -\frac{C_{n,t}}{2} \int_{\mathbb{R}^n} {\frac{u(x+y)+u(x-y) - 2u(x)}{\left|y\right|^{n+2t}} \, dy},\quad u\in\mathscr{S}(\mathbb{R}^n), x\in\mathbb{R}^n.
\end{align*}
Next we extend $(-\Delta)^t$ to larger spaces, in particular to Sobolev spaces.
\begin{Proposition}\label{Cont}
The fractional Laplacian extends as a bounded map 
\begin{equation}\label{mapFL}
(-\Delta)^t: \ H^r(\mathbb{R}^n) \rightarrow H^{r-2t}(\mathbb{R}^n)
\end{equation}
whenever $r \in \mathbb{R}$ and $t\in (0,1)$.
\end{Proposition}
%\noindent The proof of Proposition \ref{Cont} directly follows from the fact that the fractional Laplacian operator $(-\D)^a$ is a pseudodifferential operator of order $2a$, which is easy to see from the definition \eqref{FTD}.

\subsection{Fractional Sobolev spaces on domains}\label{fsd1}
Now we briefly discuss about the fractional order Sobolev spaces defined on any open subset with Lipschitz boundary.
Let $\mathcal{O} \subseteq \R^n$ be any non-empty open set (bounded or unbounded) with Lipschitz boundary. 
%Let open subset $\mathcal{O}$ of $\R^n$ definition of regional fractional Laplacian in Sobolev spaces.
We define $H^r(\mathcal{O})$, for $r \in \R$ by
\begin{equation*}
H^{r}(\mathcal{O}) := \{ u|_{\mathcal{O}} ; u \in H^r(\R^n) \},
\end{equation*}
equipped with the norm
\begin{equation*}
\lVert u \rVert_{H^r(\mathcal{O})}:= \inf_{ \substack{v \in H^r(\R^n),\\ v|_{\mathcal{O}} = u}} \lVert v \rVert_{H^r(\R^n)}.
\end{equation*}
One can equivalently define the space $H^r(\mathcal{O})$ for $r \in (0,1)$ by
\begin{equation*}
H^{r}(\mathcal{O}) = \{ u \in L^2(\mathcal{O}) ; \frac{u(x) - u(y)}{\lvert x-y\rvert^{\frac{n}{2}+r}} \in L^2(\mathcal{O} \times \mathcal{O}) \},
\end{equation*}
equipped with the inner product $\langle\, ,\rangle_{r}$ for real-valued $u,v$ as
\begin{equation*}
\langle u, v\rangle_r = \int_{\mathcal{O}}uv\, dx + \int_{\mathcal{O}\times\mathcal{O}} \frac{\left(u(x) - u(y)\right)\left(v(x)-v(y)\right)}{\lvert x-y\rvert^{n+2r}}\, dx\,dy.
\end{equation*}
Let $C^\infty_c(\overline{\mathcal{O}})$ denotes the restriction of all $C^\infty_c(\mathbb{R}^n)$ (compactly supported smooth functions in $\mathbb{R}^n$) functions to $\overline{\mathcal{O}}$.
We note that $C^{\infty}_c(\overline{\mathcal{O}})$ is dense in $H^r(\mathcal{O})$. %with respect to the above norm ($\lvert| u|\rvert_{H^r(\mathcal{O})}=\langle u, u\rangle_r$).

\noindent The dual of $H^r(\mathcal{O})$ is
\begin{equation*}
\left(H^r(\mathcal{O})\right)^{*} =\{u\in H^{-r}(\mathbb{R}^n)\, ;\, supp\, u\subseteq\overline{\mathcal{O}}\}.
\end{equation*}
%Let $H^r_0(\mathcal{O})$ denotes the closure of $C^\infty_c(\mathcal{O})$ %functions in $H^r(\mathcal{O})$. Then the dual of $H^r_0(\mathcal{O})$ is %$H^{-r}(\mathcal{O})=\{u|_{\mathcal{O}}:\, u\in H^{-r}(\mathbb{R}^n)\}$. %Moreover, whenever $0<r<\frac{1}{2}$, we have   $H^r_0(\mathcal{O})= H^r(\mathcal{O})$.  
%We also denote $\widetilde{H}^r(\mathcal{O})$ as the closure of %$C^\infty_c(\mathcal{O})$ functions in $H^r(\mathbb{R}^n)$ and for %\mathcal{O}$ being Lipschitz we identity the space as %$\widetilde{H}^r(\mathcal{O})= H^r_0(\mathcal{O})$ for $0<r<1$.
We define
\[\begin{cases}
H^r_0(\mathcal{O}):=\{\mbox{closure of $C^\infty_c(\mathcal{O})$ in $H^r(\mathcal{O})$}\}\\[1mm]
\widetilde{H}^r(\mathcal{O}) := \{\mbox{closure of $C^\infty_c(\mathcal{O})$ in $H^r(\mathbb{R}^n)$}\}.
\end{cases}\]
Then we have the following identifications \cite{McLean}: 
\begin{equation}\left(\widetilde{H}^r(\mathcal{O})\right)^* = H^{-r}(\mathcal{O}) \text{ and } \left(H^r(\mathcal{O})\right)^* = \widetilde{H}^{-r}(\mathcal{O}), \quad r \in \mathbb{R}. \label{id}\end{equation}
Furthermore, one has (\cite{McLean})
\begin{eqnarray}
%\widetilde{H}^r(\mathcal{O})= \{u\in H^r(\mathbb{R}^n);\, supp\, u\subseteq \overline{\mathcal{O}}\}, \quad r \in \mathbb{R},\\[1mm]
\widetilde{H}^r(\mathcal{O})=H^r(\mathcal{O}) =  H^r_0(\mathcal{O}),\,\,  r < 1/2,\quad\mbox{ and }\quad H^{1/2}(\mathcal{O}) =  H^{1/2}_0(\mathcal{O}),\label{key_1}\\ 
\widetilde{H}^r(\mathcal{O})=H^r_0(\mathcal{O}), \,\, r>-\frac{1}{2},\, r\neq \{\frac{1}{2},\frac{3}{2},\cdots\}.\label{key2}
\end{eqnarray}
We remark here that the above equivalences require $\mathcal{O}\subset\mathbb{R}^n$ to be  a Lipschitz domain in $\R^n$.
\begin{Remark}\label{prerem}
	Here we note that, the characteristic function on the set $\mathcal{O}$ namely $\chi_{\mathcal{O}}\in H^{1/2}_0(\mathcal{O})$ but $\chi_{\mathcal{O}}\neq \widetilde{H}^{1/2}(\mathcal{O})$.
\end{Remark}
\noindent
Next we define the Lions-Magenes space $H^{1/2}_{0,0}(\mathcal{O})$ (see \cite[Chapter 33]{Tartar}) as
\[H^{1/2}_{0,0}(\mathcal{O}):=\{u\in H^{1/2}(\mathcal{O})\,;\, \frac{u(x)}{d(x,\mathcal{O}^c)^{1/2}}\in L^2(\mathcal{O})\}\]
where $d(x) = d(x, \mathcal{O}^c)$ is a smooth positive extension of the distance to boundary function $dist(x,\mathcal{O}^c)$ inside $\mathcal{O}$.
We have  the following equivalence 
\begin{equation*}
\widetilde{H}^{1/2}(\mathcal{O})= H^{1/2}_{0,0}(\mathcal{O}).
\end{equation*}
We also mention that (see \cite[Lemma 37.1]{Tartar})
\begin{equation}\label{tareq}
u\in H^r(\mathcal{O}) \mbox{ and }\frac{u}{(d(x,\mathcal{O}^c))^r}\in L^2(\mathcal{O}) \Longleftrightarrow u\in \widetilde{H}^r(\mathcal{O}),\quad r\in (0,1).
\end{equation}
%where $d(x,\mathcal{O}^c)$ is same as above. 
\subsection{Regional fractional Laplacian operator}
Let us consider a Lipschitz domain $\mathcal{O}\subset\mathbb{R}^n$. Let $0<a<1$, 
recall the definition of the regional fractional Laplacian operator $(-\Delta)^a_{\mathcal{O}}$ on the domain $\mathcal{O}$ over the class of $C^\infty_c(\overline{\mathcal{O}})$ functions by 
\begin{equation}\label{RFL}
(-\D)^a_{\mathcal{O}}u(x) = C_{n,a}\lim\limits_{\epsilon \to 0^+} \int_{\mathcal{O} \setminus B(x,\epsilon)} \frac{u(x) - u(y)}{\lvert x-y\rvert^{n+2a}} \,dy, \quad u\in C^\infty_c(\overline{\mathcal{O}}), x\in\mathcal{O}.
\end{equation}
Now we state the following proposition (\cite[Theorem 3.3]{GM2}) for the regional fractional Laplacian operator.
\begin{Proposition}[\cite{GM2}, Theorem 3.3]
For all $u,v\in C^\infty_c(\overline{\mathcal{O}})$ we have
\begin{equation}\label{R_int_by_parts} \int_{\mathcal{O}} \left((-\Delta)^a_{\mathcal{O}} u\right)\, v\, dx = \frac{C_{n,a}}{2} \int_{\mathcal{O}}\int_{\mathcal{O}} {\frac{(u(x) - u(y))(v(x) - v(y))}{\left|x-y\right|^{n+2a}} \, dy} \, dx.
\end{equation}
\end{Proposition}

From the integration by parts formula \eqref{R_int_by_parts} we get the following corollary.
\begin{Corollary}
\[\int_{\mathcal{O}} \left((-\Delta)^a_{\mathcal{O}} u\right)\, v\, dx 
=  \int_{\mathcal{O}} \left((-\Delta)^a_{\mathcal{O}} v\right)\, u\, dx, \quad \forall u,v\in C^\infty_c(\overline{\mathcal{O}}).\]
%In particular, when $\mathcal{O}$ is also bounded, then the above integration by parts identity holds for all $u,v\in C^\infty(\overline{\mathcal{O}})$. 
\end{Corollary}

Clearly, when $\mathcal{O}=\mathbb{R}^n$  the regional fractional Laplacian 
coincides with the definition of the usual fractional Laplacian $(-\Delta)^a$,\, $(0<a<1$). Moreover, for $u\in C^\infty_c(\mathcal{O})$ the regional fractional Laplacian can be identified with the fractional Schr\"{o}dinger operator $\left((-\Delta)^a - \varphi_{a}\right)$ in $\mathcal{O}$ $(0<a<1)$
\begin{equation}\label{local_nonlocal}\forall x\in \mathcal{O},\quad (-\Delta)^a_{\mathcal{O}}u(x) = (-\Delta)^a u(x) - \varphi_{a}(x) u(x), \quad \forall u\in C^\infty_c(\mathcal{O})\end{equation}
where
\[ \varphi_{a}(x) = C_{n,a} \int_{\mathbb{R}^n\setminus\mathcal{O}}\frac{1}{|x-y|^{n+2a}}\, dy. 
 \]
The potential $\varphi_a\in C^{0,1}_{loc}(\mathcal{O})$  is a locally Lipschitz function and for some constant $C>1$ (see \cite[Lemma 2.4]{CH})
\begin{equation}\label{est_phi}
\frac{1}{C}\left(\text{dist }\ (x, \mathcal{O}^c)\right)^{-2a} \leq \varphi_a(x) \leq 
C\left(\text{dist }\ (x,\mathcal{O}^c)\right)^{-2a},\quad x\in\mathcal{O}. 
\end{equation} 

We extend the definition of $(-\Delta)^{a}_{\mathcal{O}}$ over the space $H^{a}(\mathcal{O})$ for $0<a<1$. 
Since $C^\infty_c(\overline{\mathcal{O}})$ is dense in $H^a(\mathcal{O})$, for $u,v\in H^a(\mathcal{O})$ we define $(-\Delta)^{a}_{\mathcal{O}}u \in H^{-a}(\mathcal{O})$ weakly by
\begin{equation}\label{rev2} 
\langle (-\D)^a_{\mathcal{O}} u, v\rangle_{(H^{-a}(\mathcal{O}),H^{a}(\mathcal{O}))} 
= \frac{C_{n,a}}{2} \int_{\mathcal{O}}\int_{\mathcal{O}} {\frac{(u(x) - u(y))(v(x) - v(y))}{\left|x-y\right|^{n+2a}} \, dy} \, dx.
\end{equation}

\begin{Proposition}\label{Prop_cont}
Let $0<a<1$ and $\mathcal{O}\subset \R^n$ be an open subset with Lipschitz boundary, then
\begin{equation}\label{acon}
(-\Delta)^a_{\mathcal{O}}: H^a(\mathcal{O})\rightarrow H^{-a}(\mathcal{O}) \mbox{ is continuous.}
\end{equation}
\end{Proposition}
\begin{proof}
From the integration by parts formula \eqref{R_int_by_parts} we get the duality inner-product as \eqref{rev2}, satisfying
\begin{equation*}
\begin{aligned} \left|\langle (-\D)^a_{\mathcal{O}} u, v\rangle_{(H^{-a}(\mathcal{O}),H^{a}(\mathcal{O}))} \right|
&\leq \left\|\frac{\left(u(x)-u(y)\right)}{|x-y|^{\frac{n}{2}+a}}\right\|^2_{L^2(\mathcal{O}\times\mathcal{O})} \left\|\frac{\left(v(x)-v(y)\right)}{|x-y|^{\frac{n}{2}+a}}\right\|^2_{L^2(\mathcal{O}\times\mathcal{O})}\\
&\leq \|u\|_{H^a(\mathcal{O})}\|v\|_{H^a(\mathcal{O})}.
\end{aligned}
\end{equation*}
\end{proof}

Moreover, from \eqref{rev2} it is also clear that for $0\leq \delta < a$,
\begin{equation}\label{acon2} (-\Delta)^a_{\mathcal{O}}: H^{a+\delta}(\mathcal{O})\rightarrow H^{-a+\delta}(\mathcal{O}) \mbox{ is continuous.}\end{equation}
Next we examine for $u\in \widetilde{H}^a(\mathcal{O})$ $(0<a<1)$, whether $(-\Delta)^{a/2}_{\mathcal{O}}u\in L^2(\mathcal{O})$. 
%First we see
\begin{Lemma}\label{lemnew}
For all ${\mathcal{O}^{\prime}} \Subset \mathcal{O}$ and for all $u \in \widetilde{H}^a(\mathcal{O})$, $0<a<1$ we have
\begin{equation}\label{nza} (-\D)^{a/2}_{\mathcal{O}}u \in L^2(\mathcal{O}^{\prime}).
\end{equation}
\end{Lemma}

\begin{proof}
Let us extend the function $u\in \widetilde{H}^a(\mathcal{O})$ by $0$ in $\mathbb{R}^n$ and denote the extension by $u$ also, $u\in H^a(\mathbb{R}^n)$. 
Therefore, we have $(-\Delta)^{a/2}u\in L^2(\mathbb{R}^n)$ (see \eqref{mapFL}), in particular  $(-\Delta)^{a/2}u\in L^2(\mathcal{O})$. Now, from \eqref{local_nonlocal}, in $\mathcal{O}$ we have 
\begin{equation}\label{ln2}
(-\Delta)^{a/2}_{\mathcal{O}}u +\varphi_{a/2}(x)u = (-\Delta)^{a/2}u|_{\mathcal{O}} \in L^2(\mathcal{O}).
\end{equation}
Since ${\mathcal{O}^{\prime}} \Subset \mathcal{O}$ implies $\varphi_{a/2}(x)u|_{\mathcal{O}^{\prime}} \in L^2(\mathcal{O}^{\prime})$ and hence we get \eqref{nza}.
\end{proof}

\begin{Lemma}\label{norm0}
Let $\mathcal{O}$ be a bounded Lipschitz domain in $\mathbb{R}^n$. Let  $u\in \widetilde{H}^a(\mathcal{O})$ for $0<a<1$, then $ (-\Delta)^{a/2}_{\mathcal{O}}u\in L^2(\mathcal{O})$ and
\begin{equation}\label{norm_esti}
\|u\|_{L^2(\mathcal{O})}+\|(-\Delta)^{a/2}_{\mathcal{O}}u\|_{L^2(\mathcal{O})}\leq \|u\|_{\widetilde{H}^a(\mathcal{O})}.
\end{equation}
%For $a=\frac{1}{2}$ whenever $u\in H^{1/2}_{0,0}(\mathcal{O})$,  $ (-\Delta)^{1/4}_{\mathcal{O}}u\in L^2(\mathcal{O})$ and 
%\begin{equation}\label{tareq2}\|u\|_{L^2(\mathcal{O})}+\|(-\Delta)^{1/4}_{\mathcal{O}}u\|_{L^2(\mathcal{O})}\leq \|u\|_{H^{1/2}_{0,0}(\mathcal{O})}.\end{equation}
\end{Lemma}
\begin{Remark}
Note that, the above estimate \eqref{norm_esti} holds for $H^{1/2}_{0,0}(\mathcal{O})$ space, whereas, for $u \in H^{1/2}_0(\mathcal{O})$, $(-\D)^{1/4}_{\mathcal{O}} u$ might not be in $L^2(\mathcal{O})$. See the counterexample in  \cite[Section 2]{DYB}.\hfill\qed
\end{Remark}

\begin{proof}[Proof of Lemma \ref{norm0}] 
From  \eqref{ln2} it is enough to show that $\varphi_{a/2}(x)u\in L^2(\mathcal{O})$.
Note that from \eqref{est_phi} we have $\varphi_{a/2}(x)\sim \left(dist\, (x, \mathcal{O}^c)\right)^{-a}$ for $x\in\mathcal{O}$ and $0<a<1$. 
We recall the following fractional Hardy inequalities from \cite{DYB} as follows:
\begin{eqnarray}
&&\mbox{For }0<a<\frac{1}{2}, \quad \forall u\in H^a_0(\mathcal{O}),\notag\\
&&\int_{\mathcal{O}} \frac{|u(x)|^2}{\left(dist\, (x, \mathcal{O}^c)\right)^{2a}}\, dx
\leq 
C\left( \int_{\mathcal{O}}|u|^2\, dx + \int_{\mathcal{O}}\int_{\mathcal{O}} {\frac{(u(x) - u(y))^2}{\left|x-y\right|^{n+2a}} \, dy} \, dx \right),\label{Hardy1}\\
&&\mbox{for }a=\frac{1}{2}, \quad \forall u\in H^{1/2}_{0,0}(\mathcal{O}), \notag\\
&&\int_{\mathcal{O}} \frac{|u(x)|^2}{\left(dist\, (x, \mathcal{O}^c)\right)}\, dx 
\leq 
C\left( \int_{\mathcal{O}}|u|^2\, dx + \int_{\mathcal{O}}\int_{\mathcal{O}} {\frac{(u(x) - u(y))^2}{\left|x-y\right|^{n+1}} \, dy} \, dx \right),\label{Hardy0}\\
&&\mbox{for } \frac{1}{2}<a<1, \quad \forall u\in H^a_0(\mathcal{O}),\notag\\
&&\int_{\mathcal{O}} \frac{|u(x)|^2}{\left(dist\, (x, \mathcal{O}^c)\right)^{2a}}\, dx \leq 
C \int_{\mathcal{O}}\int_{\mathcal{O}} {\frac{(u(x) - u(y))^2}{\left|x-y\right|^{n+2a}} \, dy} \, dx,\label{Hardy2}
\end{eqnarray}
where $C=C(\mathcal{O},n,a)$.

Then for $u\in \widetilde{H}^a(\mathcal{O})$, in all cases, we have that $(-\Delta)^{a/2}_{\mathcal{O}}u\in L^2(\mathcal{O})$. Moreover, from \eqref{ln2} we conclude
\begin{align}
\|(-\Delta)^{a/2}_{\mathcal{O}}u\|_{L^2(\mathcal{O})} &\leq \|\varphi_{a/2}u\|_{L^2(\mathcal{O})} + \|(-\Delta)^{a/2}u\|_{L^2(\mathcal{O})}\notag\\
&\leq C \|u\|_{H^a(\mathcal{O})}  + \|(-\Delta)^{a/2}u\|_{L^2(\mathbb{R}^n)}\label{ineq1}.
\end{align}
We also have that
\begin{align*}
&\|(-\Delta)^{a/2}u\|^2_{L^2(\mathbb{R}^n)} \\
&=  C_{n,a} \int_{\mathbb{R}^n}\int_{\mathbb{R}^n} {\frac{(u(x) - u(y))^2}{\left|x-y\right|^{n+2a}} \, dy} \, dx \\
&=C_{n,a} \int_{\mathcal{O}}\int_{\mathcal{O}} {\frac{(u(x) - u(y))^2}{\left|x-y\right|^{n+2a}} \, dy} \, dx\quad\mbox{(since $u=0$ in $\mathbb{R}^n\setminus\overline{\mathcal{O}}$)}\\
&\qquad+ 2 C_{n,a}\int_{\mathcal{O}}\left[ (u(x))^2 \left(\int_{\mathbb{R}^n\setminus\mathcal{O}} \frac{1}{\left|x-y\right|^{n+2a}} \, dy\right) \right] dx \quad(\mbox{see}  \eqref{est_phi})\\
&\leq C_{n,a} \int_{\mathcal{O}}\int_{\mathcal{O}} {\frac{(u(x) - u(y))^2}{\left|x-y\right|^{n+2a}} \, dy} \, dx + C\int_{\mathcal{O}} \frac{(u(x))^2}{ dist\, \left(x, \mathcal{O}^c\right)^{2a}} \, dx\\
&\leq \|u\|^2_{H^a(\mathcal{O})}.
\end{align*}
Therefore, $\|u\|_{L^2(\mathcal{O})}+ 
\|(-\Delta)^{a/2}_{\mathcal{O}}u\|_{L^2(\mathcal{O})}\leq C\|u\|_{H^a(\mathcal{O})}$.
%\begin{equation}
%\|u\|_{L^2(\mathcal{O})}+ 
%\|(-\Delta)^{a/2}_{\mathcal{O}}u\|_{L^2(\mathcal{O})}\leq C\|u\|_{H^a(\mathcal{O})}.
%\end{equation}
%This completes the proof of the lemma. 
\hfill\end{proof}

\begin{Lemma}\label{rev3}
Let $u\in \widetilde{H}^r(\mathcal{O})$ for $0<a\leq r$. Then $(-\Delta)^{a/2}_\mathcal{O}u\in H^{r-a}(\mathcal{O})$.
\end{Lemma}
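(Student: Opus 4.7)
The plan is to reduce the regional fractional Laplacian to the full-space fractional Laplacian via the identity \eqref{local_nonlocal} and then exploit the mapping property \eqref{mapFL} together with a weighted estimate on the zero-order ``tail'' potential $\varphi_{a/2}$.

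First I would observe that, since $\widetilde{H}^r(\mathcal{O}) = H^r_{\overline{\mathcal{O}}}(\mathbb{R}^n)$, the zero extension of $u$ lies in $H^r(\mathbb{R}^n)$, so by \eqref{mapFL} one has $(-\Delta)^{a/2} u \in H^{r-a}(\mathbb{R}^n)$, and in particular its restriction to $\mathcal{O}$ belongs to $H^{r-a}(\mathcal{O})$. Next, using density of $C_c^\infty(\mathcal{O})$ in $\widetilde{H}^r(\mathcal{O})$ and the continuity in \eqref{acon2}, I would upgrade the pointwise formula \eqref{local_nonlocal} to a distributional identity on $\mathcal{O}$:
\begin{equation*}
(-\Delta)^{a/2}_{\mathcal{O}} u \;=\; (-\Delta)^{a/2} u \;-\; \varphi_{a/2}\, u.
\end{equation*}
The claim therefore reduces to showing $\varphi_{a/2}\, u \in H^{r-a}(\mathcal{O})$.

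For that remaining step, I would exploit the two-sided bound \eqref{est_phi}, which yields $\varphi_{a/2}(x) \simeq d(x,\mathcal{O}^c)^{-a}$, together with the weighted characterization \eqref{tareq} giving $u/d^r \in L^2(\mathcal{O})$ for every $u \in \widetilde{H}^r(\mathcal{O})$. When $r = a$ this is enough to put $\varphi_{a/2}\, u$ in $L^2(\mathcal{O})$, recovering the content of Lemma \ref{norm0}. For the range $0 < r-a < 1$, I would estimate the Gagliardo seminorm of $\varphi_{a/2}\, u$ by splitting the double integral into an interior piece, where the local Lipschitz regularity \eqref{LLF} of $\varphi_{a/2}$ gives a straightforward control by $\|u\|_{H^{r-a}(\mathcal{O}')}$ on $\mathcal{O}' \Subset \mathcal{O}$, and a near-boundary piece, where the singular behaviour $\varphi_{a/2} \sim d^{-a}$ is absorbed using the fractional Hardy inequalities \eqref{Hardy1}--\eqref{Hardy2} applied to $u \in \widetilde{H}^r(\mathcal{O})$. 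For $r-a \ge 1$, the same splitting is iterated on derivatives, using that $\varphi_{a/2}$ is $C^\infty$ in the interior.

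I expect the main obstacle to be precisely this balancing act near $\partial \mathcal{O}$: the weight $\varphi_{a/2}$ blows up like $d^{-a}$, while $u$ vanishes only in the fractional sense encoded by membership in $\widetilde{H}^r(\mathcal{O})$, so the two singularities must be matched through the Hardy-type bounds. One also has to treat the critical exponent $a=\tfrac{1}{2}$ separately, where $H^{1/2}_{0,0}(\mathcal{O})$ and the inequality \eqref{Hardy0} must replace $H^{1/2}_0(\mathcal{O})$, and one must distinguish which sub-range of $r-a$ is handled by Slobodeckij seminorm control versus higher-order Sobolev control.
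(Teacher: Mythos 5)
Your overall skeleton is the same as the paper's: split $(-\Delta)^{a/2}_{\mathcal{O}}u=(-\Delta)^{a/2}u-\varphi_{a/2}u$ via \eqref{local_nonlocal}, handle $(-\Delta)^{a/2}u|_{\mathcal{O}}\in H^{r-a}(\mathcal{O})$ by \eqref{mapFL}, and reduce everything to showing $\varphi_{a/2}u\in H^{r-a}(\mathcal{O})$. The divergence, and the gap, is in that last step. The paper does not attempt a hands-on estimate: it invokes the result of \cite{Grubb1} that $u/d^{a}\in H^{r-a}(\mathcal{O})$ whenever $u\in\widetilde{H}^r(\mathcal{O})$, and then uses $\varphi_{a/2}\sim d^{-a}$ (cf.\ \eqref{est_phi}). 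Your proposal replaces this citation by the claim that the near-boundary part of the Gagliardo seminorm of $\varphi_{a/2}u$ can be ``absorbed using the fractional Hardy inequalities \eqref{Hardy1}--\eqref{Hardy2}''. Those inequalities, however, only control the weighted $L^2$ quantity $\int_{\mathcal{O}}|u|^2 d(x,\mathcal{O}^c)^{-2\alpha}dx$ by the $H^\alpha$ Gagliardo seminorm; together with \eqref{tareq} they give exactly the $r=a$ case (your recovery of Lemma \ref{norm0}) and nothing more. For $r>a$ you must bound the order-$(r-a)$ seminorm of the product, i.e.\ terms of the type $\iint_{\mathcal{O}\times\mathcal{O}} d(x)^{-2a}\,|u(x)-u(y)|^2\,|x-y|^{-n-2(r-a)}\,dy\,dx$ and $\iint_{\mathcal{O}\times\mathcal{O}}|\varphi_{a/2}(x)-\varphi_{a/2}(y)|^2\,|u(y)|^2\,|x-y|^{-n-2(r-a)}\,dx\,dy$, where near $\partial\mathcal{O}$ the increment of $\varphi_{a/2}$ behaves like $|x-y|\cdot d^{-a-1}$ rather than being Lipschitz with a uniform constant. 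Neither of these weighted \emph{fractional-order} estimates follows from \eqref{Hardy1}--\eqref{Hardy2} or from \eqref{LLF}; establishing them is essentially equivalent to proving the weighted regularity statement $u/d^{a}\in H^{r-a}(\mathcal{O})$ that the paper imports from Grubb. So as written, the crucial step is asserted rather than proved.

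Two smaller remarks. First, your concern about $r-a\geq 1$ is moot in the paper's setting (the lemma is used with $0<a\leq r<1$, so $r-a<1$), and if you did want that range you could not rely on \eqref{tareq} or \eqref{Hardy1}--\eqref{Hardy2}, which are stated only for exponents in $(0,1)$. Second, your separate treatment of $a=\tfrac12$ via $H^{1/2}_{0,0}$ and \eqref{Hardy0} is the right instinct for the $L^2$-level statement, but the same caveat applies: it does not by itself produce the $H^{r-a}$ smoothness of $\varphi_{a/2}u$. To make your route self-contained you would need to prove a weighted fractional Hardy inequality of the form $\bigl\|d^{-a}u\bigr\|_{H^{r-a}(\mathcal{O})}\leq C\|u\|_{\widetilde{H}^r(\mathcal{O})}$; alternatively, simply cite \cite{Grubb1} as the paper does.
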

\begin{proof}
Let us write 
\begin{equation}\label{rev5}
(-\Delta)^{a/2}_{\mathcal{O}}u  = (-\Delta)^{a/2}u-\varphi_{a/2}u
\end{equation}
for $u\in \widetilde{H}^r(\mathcal{O})$.
Thanks to \eqref{mapFL}, we already know that $\left((-\Delta)^{a/2}u\right)|_\mathcal{O}\in H^{r-a}(\mathcal{O})$.
Moreover, from \cite{Grubb1} we have that $\frac{u}{d(x,\mathcal{O}^c)^a}\in H^{r-a}(\mathcal{O})$ for $u\in \widetilde{H}^r(\mathcal{O})$. Since $\varphi_{a/2}\sim d(x,\mathcal{O}^c)^{-a}$, therefore from \eqref{rev5} it follows $(-\Delta)^{a/2}_{\mathcal{O}}u\in H^{r-a}(\mathcal{O})$.
\end{proof}

Let $0<a<\min\{1,\frac{n}{2}\}$, the well-known Hardy-Littlewood-Sobolev inequality follows as (see \cite[Proposition 15.5]{PAC}) 
\begin{equation}
\label{hls} \|u\|_{L^{\frac{2n}{n-2a}}(\mathbb{R}^n)}\leq C \|(-\Delta)^{a/2}u\|_{L^2(\mathbb{R}^n)},\quad\forall u\in C^\infty_c(\mathbb{R}^n)
\end{equation}
where $C$ depends on $n$ and $a$.
Let $\mathcal{O}\subset \mathbb{R}^n$ be a bounded Lipschitz domain. For $u\in \widetilde{H}^a(\mathcal{O})$ with $0<a<\min \{1, \frac{n}{2}\}$ we have the following inequality
\begin{equation}\label{Hardy_Sobolev}
\|u\|_{L^2(\mathcal{O})}\leq C_{\mathcal{O}}\|u\|_{L^{\frac{2n}{n-2a}}(\mathbb{R}^n)}\leq C \|(-\Delta)^{a/2}u\|_{L^2(\mathbb{R}^n)}.
\end{equation}
%as $supp\, u\subseteq \overline{\mathcal{O}}$.
\begin{Proposition}[\cite{HRAV, DIV}]
For $0<a<1$ we have the following Poincar\'{e}-Wirtinger inequality
\begin{equation}\label{Poincare1} 
\left\lVert u-\frac{1}{|\mathcal{O}|}\int_\mathcal{O}u\right\rVert_{L^2(\mathcal{O})} \leq 
C \left(\int_{\mathcal{O}}\int_{\mathcal{O}} {\frac{(u(x) - u(y))^2}{\left|x-y\right|^{n+2a}} \, dy} \, dx\right)^{\frac{1}{2}}, \quad
\forall u\in H^a(\mathcal{O});
\end{equation}
where $C=C(\mathcal{O},n,a)$.
\end{Proposition}
%\noindent See \cite{HRAV, DIV} for the proof of the above proposition.

\begin{Lemma}[Poincar\'e inequality]
Let $\mathcal{O} \subset \R^n$, $n\geq 1$ be a bounded open set with Lipschitz boundary and $u\in \widetilde{H}^a(\mathcal{O})$, then for some $C=C(\mathcal{O},n,a)$ we have
\begin{equation}\label{Poincare2} 
%\forall u \in \widetilde{H}^a(\mathcal{O}),\quad 
\|u\|_{L^2(\mathcal{O})} \leq C \left(\int_{\mathcal{O}}\int_{\mathcal{O}} {\frac{(u(x) - u(y))^2}{\left|x-y\right|^{n+2a}} \, dy} \, dx\right)^{\frac{1}{2}}
\mbox{ whenever }\frac{1}{2}<a<1.
\end{equation}
\end{Lemma}
\begin{proof}
We have for $u\in \widetilde{H}^a(\mathcal{O})$
\[ \|(-\Delta)^{a/2}u\|^2_{L^2(\mathbb{R}^n)} =  \int_{\mathcal{O}}\int_{\mathcal{O}} {\frac{(u(x) - u(y))^2}{\left|x-y\right|^{n+2a}} \, dy} \, dx +  2\int_{\mathcal{O}}\int_{\mathbb{R}^n\setminus\mathcal{O}} {\frac{(u(x) - u(y))^2}{\left|x-y\right|^{n+2a}} \, dy} \, dx.\]
Using \eqref{est_phi} and \eqref{Hardy2} on the second term of the above identity for $\frac{1}{2}<a<1$, we get \eqref{Poincare2}. 
\end{proof}

\begin{Remark}
The above inequality is not true for $0<a\leq \frac{1}{2}$ (see \cite{DYB}). For example, the characteristic function $\chi_\mathcal{O}\in \widetilde{H}^a(\mathcal{O})$ for $a\in (0,\frac{1}{2})$, and \eqref{Poincare2} does not hold in this case. For $a=\frac{1}{2}$ see the Remark \ref{prerem}.
%In general, we have the following Poincar\'{e}-Wirtinger inequality (see \cite{HRAV, DIV}) as
%\begin{align}\label{Poincare1} 
%\forall u\in H^a(\mathcal{O}), \quad
%&\left\lVert u-\frac{1}{|\mathcal{O}|}\int_\mathcal{O}u\right\rVert_{L^2(\mathcal{O})} \leq 
%C \left(\int_{\mathcal{O}}\int_{\mathcal{O}} {\frac{(u(x) - u(y))^2}{\left|x-y\right|^{n+2a}} \, dy} \, dx\right)^{\frac{1}{2}}\\
%&\qquad\qquad\qquad\qquad\qquad\qquad\qquad\qquad\qquad\mbox{whenever $0<a<1$}\notag
%\end{align}
%where $C=C(\mathcal{O},n,a)$.
\end{Remark}

\subsection{Direct problem for the regional fractional Laplacian operator}\label{d2}
In this article we discuss two direct problems corresponding to the non-local operators $(-\D)^a_{\Omega}$ and $\mL_{b,q}$.
Let us start with the regional fractional Laplacian operator $(-\D)^a_{\Omega}$.
Let $0<a<1$, $\mathcal{O} \Subset \Omega$, i.e. $\mathcal{O}$ is compactly contained in $\Omega$ and $f \in H^{-a}(\mathcal{O})$, $g \in H^a(\Omega\setminus \overline{\mathcal{O}})$, we solve  the Dirichlet problem
\begin{equation}\label{Dir_Prob_RFL}
(-\D)^a_{\Omega} u = f \quad \mbox{in } \mathcal{O}, \quad \mbox{and }u = g \quad \mbox{on }\Omega\setminus\overline{\mathcal{O}}.
\end{equation} 
We define the corresponding bilinear form $\mathcal{B}_{\Omega}: H^a(\Omega)\times H^a(\Omega)\mapsto \mathbb{R}$ by
\begin{equation*}
\mathcal{B}_{\Omega}(\varphi,\psi) := \int_{\Omega}\int_{\Omega}{\frac{(\varphi(x) - \varphi(y))(\psi(x) - \psi(y))}{\left|x-y\right|^{n+2a}} \, dy} \, dx.
\end{equation*}
We say $u \in H^a(\Omega)$ is a weak solution of \eqref{Dir_Prob_RFL} if $\mathcal{B}_{\Omega}(u,\psi) = \langle f, \psi \rangle$, for all $\psi \in \tilde{H}^a(\mathcal{O})$ with 
$u=g$ on $\Omega\setminus \overline{\mathcal{O}}$.

\begin{Theorem}\label{existence_th2}
Let $\Omega \subset \R^n$, bounded Lipschitz domain, and  $\mathcal{O}\subset \Omega$ be a non-empty open Lipschitz domain compactly contained in $\Omega$.  
Let $f \in H^{-a}(\mathcal{O})$ and $G \in H^{a}(\Omega)$ with $G=g$ in $\Omega\setminus\overline{\mathcal{O}}$, $0<a<1$. There exists a unique weak solution $v \in H^a(\Omega)$ solving \eqref{Dir_Prob_RFL}. Moreover, it satisfies the
following stability estimate
\begin{equation}\label{stability}
\lVert v \rVert_{H^a(\Omega)} 
\leq C\left( \lVert f \rVert_{H^{-a}(\mathcal{O})} 
+ \lVert G \rVert_{H^a(\Omega)} \right).
\end{equation}
\end{Theorem}
%\noindent We prove the above theorem in Section \ref{d2}.
\subsection*{Proof of Theorem \ref{existence_th2}:}\label{d3}
%Let $\Omega$ be a bounded Lipschitz domain in $\mathbb{R}^n$, and  $\mathcal{O}\Subset \Omega$ be an open subset {\color{blue}with Lipschitz %boundary}, compactly contained in $\Omega$. We consider the following non-local problem
%\begin{equation}
%\begin{aligned}\label{inhomeq2}
%(-\Delta)^a_{\Omega}u &= f \quad\mbox{ in }\mathcal{O}\\
%u &= g\quad\mbox{in }\Omega\setminus\mathcal{O},
%\end{aligned}
%\end{equation}
%
%where $f \in H^{-a}(\mathcal{O})$ and $g \in H^a(\Omega\setminus \overline{\mathcal{O}})$.
\textbf{Homogeneous Case:}
Let us begin with the homogeneous boundary value problem, i.e. when $g=0$ in $\Omega\setminus\overline{\mathcal{O}}$.
Let $f\in H^{-a}(\mathcal{O})$, we say $v_f\in \widetilde{H}^a(\mathcal{O})$, $0<a<1$, is the weak solution of
\begin{equation}\label{weaksl2}
(-\Delta)^a_{\Omega} v = f \quad \mbox{in }\mathcal{O}, \quad v = 0 \quad \mbox{in }\Omega\setminus\overline{\mathcal{O}},
\end{equation}
if for all $w\in C^\infty_c(\mathcal{O})$
\begin{equation}\label{nbil2}
\mathcal{B}_{\Omega}(v_f,w)= \langle f, w\rangle_{(H^{-a}(\mathcal{O}),\widetilde{H}^a(\mathcal{O}))}.
\end{equation}
%holds, where the bilinear form $\mathcal{B}_{\Omega}: H^a_0(\Omega)\times H^a_0(\Omega)\mapsto \mathbb{R}$ is defined as
%\begin{equation*}
%\mathcal{B}_{\Omega}(\varphi,\psi) := \int_{\Omega}\int_{\Omega}{\frac{(\varphi(x) - \varphi(y))(\psi(x) - \psi(y))}{\left|x-y\right|^{n+2a}} \, dy} \, dx.
%\end{equation*}
A straightforward calculation shows that the bilinear form $\mathcal{B}(\cdot,\cdot)$ is continuous over $\widetilde{H}^a(\Omega)\times \widetilde{H}^a(\Omega)$.
Next we show that $\mathcal{B}_{\Omega}(\cdot,\cdot)$ is coercive over  $\widetilde{H}^a(\mathcal{O})$ for $\mathcal{O}\Subset\Omega$ i.e.
\begin{equation}\label{poin2}
\mathcal{B}_{\Omega}(\varphi,\varphi) = \int_{\Omega}\int_{\Omega}{\frac{(\varphi(x) - \varphi(y))^2}{\left|x-y\right|^{n+2a}} \, dy} \, dx \geq C \|\varphi\|^2_{\widetilde{H}^a(\mathcal{O})},\quad\forall \varphi\in \widetilde{H}^a(\mathcal{O}).
\end{equation}
Let $\varphi\in C^\infty_c(\mathcal{O})$, using the Poincar\'{e}-Wirtinger inequality \eqref{Poincare1} over $\Omega$, we get
\begin{equation*}
\begin{aligned}
C \left(\int_{\Omega}\int_{\Omega} {\frac{(\varphi(x) - \varphi(y))^2}{\left|x-y\right|^{n+2a}} \, dy} \, dx\right)^{\frac{1}{2}} 
&\geq \left\lVert\varphi-\frac{1}{|\Omega|}\int_\Omega \varphi\right\rVert_{L^2(\Omega)} \\
& \geq \lVert\varphi\rVert_{L^2(\mathcal{O})}-\frac{1}{|\Omega|^{\frac{1}{2}}}\int_\Omega |\varphi| \\
& \geq \|\varphi\|_{L^2(\mathcal{O})}-\frac{1}{|\Omega|^{\frac{1}{2}}}\int_\Omega\chi_\mathcal{O}\, |\varphi|\\
&\geq \left( 1-\frac{|\mathcal{O}|^{\frac{1}{2}}}{|\Omega|^{\frac{1}{2}}}\right)\|\varphi\|_{L^2(\mathcal{O})}.
\end{aligned}
\end{equation*}
Hence, we have \eqref{poin2}. 

Therefore, using the Lax-Milligram theorem, for a given $f\in H^{-a}(\mathcal{O})$, we have a unique weak solution of \eqref{weaksl2} in $\widetilde{H}^a(\mathcal{O})$ satisfying the stability estimate 
\[ \|v_f\|_{\widetilde{H}^a(\mathcal{O})}\leq \|f\|_{H^{-a}(\mathcal{O})}.\]

\begin{Remark}
	Here we remark that, in establishing the coercivity of  the bilinear form  $\mathcal{B}_\Omega(\cdot,\cdot)$ over the $\widetilde{H}^a(\mathcal{O})$ space, the assumption $\mathcal{O}\Subset\Omega$ is crucial.  For example, $\mathcal{B}_\Omega(\cdot,\cdot)$ fails to become coercive over the space $\widetilde{H}^a(\Omega)$ while $0<a<\frac{1}{2}$.
\end{Remark}
\noindent
\textbf{Inhomogeneous Case:} Let $G\in H^{a}(\Omega)$ then from \eqref{acon} we know $(-\Delta)^a_{\Omega}G\in H^{-a}(\Omega)$, $0<a<1$. Now we are interested in   the following inhomogeneous problem 
\begin{equation*} 
(-\Delta)^a_{\Omega} v = f \quad\mbox{in }\mathcal{O},\quad (v-G)\in \widetilde{H}^{a}(\mathcal{O}).
\end{equation*}
Then $w=(v-G)\in \widetilde{H}^{a}(\mathcal{O})$ solves 
\[ (-\Delta)^a_{\Omega}w= f-(-\Delta)^a_{\Omega}G\in H^{-a}(\mathcal{O}),\]
and by the previous discussion we have a unique weak solution in $\widetilde{H}^{a}(\mathcal{O})$,
where as the stability estimate \eqref{stability} follows from Proposition \ref{Prop_cont}.
\qed

\begin{Corollary}\label{cr2}
The operator 
\[\left((-\Delta)^{a}_{\Omega}\right)^{-1}: H^{-a}(\mathcal{O})\rightarrow \widetilde{H}^{a}(\mathcal{O})\]
is one-one, onto and bounded.  
\end{Corollary}

\subsection{Direct problem for $\mL_{b,q}$}\label{Sec_direct_prob}
Here we study the direct problem for the operator $\mL_{b,q}$.
Let $\Omega$ be an open, bounded Lipschitz domain in $\mathbb{R}^n$. We consider the inhomogeneous problem
\begin{equation}\label{eq2}
\begin{aligned}
\mL_{b,q} u:= \left((-\D)^t + (-\D)_{\Omega}^{s/2}b(-\D)_{\Omega}^{s/2} + q\right)u &= F  \quad &&\mbox{in }\Omega,\\
u &= f \quad &&\mbox{in }\Omega_e,
\end{aligned}
\end{equation}
where $F \in H^{-t}(\Omega)$, $f \in \widetilde{H}^t(\Omega_e)$ and $b,\, q\in L^\infty(\Omega)$.\\
\\
The bilinear form associated to the operator $\mL_{b,q}$ is
\begin{equation*}
\mathcal{B}_{b,q}: H^t(\R^n)\times H^t(\R^n)\rightarrow \R
\end{equation*}
given by 
\begin{equation}\label{bilinear}
\begin{aligned} 
\mathcal{B}_{b,q}(\varphi,\psi) &:= \int_{\R^n} (-\D)^{t/2}\varphi(x)\, (-\D)^{t/2}\psi(x) \, dx \\
&\quad+ \int_{\Omega}b(x)\left((-\D)_{\Omega}^{s/2}\varphi\right)(x)\ \left((-\D)_{\Omega}^{s/2}\psi\right)(x)\, dx + \int_{\Omega} q(x)\varphi(x)\,\psi(x)\, dx.
\end{aligned}
\end{equation}
We define $u \in H^t(\R^n)$ to be a weak solution of \eqref{eq2} if for every $\varphi \in C^\infty_c(\Omega)$ we have
\[ \mathcal{B}_{b,q}(u,\varphi) = \langle F, \varphi\rangle, \quad \mbox{with }u=f \mbox{ in }\Omega_e.\]
We state the following theorem for wellposedness of the Dirichlet problem \eqref{eq2}

\begin{Theorem}\label{existence_th1}
Let $\Omega \subset \R^n$, $n\geq 1$, be a Lipschitz domain and $b,q \in L^{\infty}(\Omega)$.
Let $0<t<1$, for any $F \in H^{-t}(\Omega)$ and $f \in \widetilde{H}^t(\Omega_e)$ there exist a unique weak solution $u \in H^t(\R^n)$ solving the Dirichlet problem \eqref{eq2}.
It satisfies the following stability estimate
\begin{equation*}
\lVert u \rVert_{H^t(\R^n)} 
\leq C\left( \lVert F \rVert_{H^{-t}(\Omega)} 
+ \lVert f \rVert_{H^t(\Omega_e)} \right).
\end{equation*} 

\end{Theorem}

\begin{proof}
By extending $f\in \widetilde{H}^t(\Omega_e)$ by $0$ in $\Omega$ as a $H^t(\mathbb{R}^n)$ function, we observe that the theorem is equivalent to considering the following homogeneous problem for $v=(u-f)\in \widetilde{H}^t(\Omega)$ and $\widetilde{F}:=F-(-\Delta)^tf$,
\begin{equation}\begin{aligned}\label{eq3}
\mL_{b,q} v &=\widetilde{F}  \quad \mbox{in }\Omega,\\
v&=0 \quad\mbox{in }\Omega_e.
\end{aligned}\end{equation}
Equivalently, in terms of the bilinear form we seek $v\in \widetilde{H}^t(\Omega)$ solving
\begin{equation}\label{rev1}
\mathcal{B}_{b,q}(v,\varphi) = \langle \widetilde{F}, \varphi\rangle, \quad\forall \varphi \in \widetilde{H}^t(\Omega),
\end{equation}
where $\langle\cdot,\cdot\rangle$ denotes the usual duality between the spaces  $H^{-t}(\Omega)$ and $\widetilde{H}^t(\Omega)$. Let us note that,
\[ \langle \widetilde{F}, \varphi\rangle_{H^{-t}(\Omega), \widetilde{H}^t(\Omega)}\leq \left(\|F\|_{H^{-t}(\Omega)} + \|f\|_{H^t(\Omega_e)}\right)\,\|\varphi\|_{\widetilde{H}^t(\Omega)}.\]
In order to prove the existence of the solution of \eqref{eq2}, now, we will show the existence of a solution $v \in \widetilde{H}^t(\Omega)$ solving \eqref{eq3}.

\subsection*{Continuity of the bilinear form $\mathcal{B}_{b,q}(\cdot,\cdot)$}  
Let $\varphi, \psi \in \widetilde{H}^t(\Omega)$, first note that due to \eqref{nza} for $0<s<t<1$ with $s\neq \frac{1}{2}$ we have
\[||(-\D)^{s/2}_{\Omega}\varphi||_{L^2(\Omega)} \leq C || \varphi ||_{\widetilde{H}^s(\Omega)} \leq || \varphi ||_{\widetilde{H}^t(\Omega)}.\]

For $s=\frac{1}{2}$ and $\varphi\in\widetilde{H}^t(\Omega)$ with $\frac{1}{2}<t<1$, we find that $\varphi\in H^{1/2}_{0,0}(\Omega)$ satisfying 
\[||(-\D)^{1/4}_{\Omega}\varphi||_{L^2(\Omega)} \leq C || \varphi ||_{H^{1/2}_{0,0}(\Omega)} \leq || \varphi ||_{\widetilde{H}^t(\Omega)}.\]

To see this, let $\varphi\in \widetilde{H}^r(\Omega)$ for some $\frac{1}{2}<r<1$, we claim that $\varphi\in H^{1/2}_{0,0}(\Omega)$, satisfying $||(-\D)^{1/4}_{\Omega}\varphi||_{L^2(\Omega)} \leq C || \varphi ||_{H^{1/2}_{0,0}(\Omega)} \leq || \varphi ||_{\widetilde{H}^r(\Omega)}$. 
By Using \eqref{tareq} we have $\frac{\varphi}{d(x,\Omega^c)^r}\in L^2(\Omega)$, where $d(x, \Omega^c)$ is a smooth positive extension into $\Omega$ of $dist(x,\Omega^c)$ near $\partial\Omega$.
Now since $\frac{1}{2}<r<1$, we have that$\frac{\varphi}{d(x,\Omega^c)^{\frac{1}{2}}}\in L^2(\Omega)$. Hence $\varphi\in H^{1/2}_{0,0}(\Omega)$ and using \eqref{norm_esti} the above estimate follows.

Therefore, we have
\begin{equation}\label{cont_bilin}
\begin{aligned}
\lvert \mathcal{B}_{b,q}(\varphi,\psi)\rvert 
&\leq || (-\D)^{t/2}\varphi||_{L^2(\R^n)} || (-\D)^{t/2}\psi||_{L^2(\R^n)}\\ &\quad+||b||_{L^\infty(\Omega)}||(-\D)^{s/2}_{\Omega}\varphi||_{L^2(\Omega)}||(-\D)^{s/2}_{\Omega}\psi||_{L^2(\Omega)}\\ &\quad+||q||_{L^\infty(\Omega)}||\varphi||_{L^2(\Omega)}||\psi||_{L^2(\Omega)},\\
&\leq C ||\varphi||_{\widetilde{H}^t(\Omega)}||\psi||_{\widetilde{H}^t(\Omega)}.
\end{aligned}
\end{equation}
\noindent{\bf Coercivity of the bilinear form $\mathcal{B}_{b,q}$ on $\widetilde{H}^t(\Omega)$.}
Let $0<s<t<1$ and $\varphi \in \widetilde{H}^t(\Omega)$. Then
%using the Hardy-Littlewood-Sobolev inequality (see \eqref{Hardy_Sobolev}) on a bounded domain
%\[
%\lVert \varphi \rVert_{L^2(\Omega)} \leq C\lVert \varphi \rVert_{L^{\frac{2n}{n-2t}}(\Omega)} \leq C\lVert (-\Delta)^{t/2}\varphi \rVert_{L^2(\R^n)}
%\]
for $\sigma>\lVert q\rVert_{L^{\infty}(\Omega)}$ we obtain
\begin{equation}\label{Coercivity}
\begin{aligned}
\mathcal{B}_{b,q}(\varphi,\varphi) + \sigma\langle \varphi,\varphi\rangle_{L^2(\Omega)}
\geq&\, || (-\D)^{t/2}\varphi||^2_{L^2(\R^n)} + \sigma \lVert \varphi \rVert^2_{L^2(\Omega)}\\
&-||b||_{L^\infty(\Omega)}||(-\D)^{s/2}_{\Omega}\varphi||^2_{L^2(\Omega)}- ||q||_{L^\infty(\Omega)}||\varphi||^2_{L^2(\Omega)}\\
\geq&\, || (-\D)^{t/2}\varphi||^2_{L^2(\R^n)} + (\sigma - \lVert q \rVert_{L^{\infty}(\Omega)}) \lVert \varphi \rVert^2_{L^2(\Omega)}\\
&-||b||_{L^\infty(\Omega)}||(-\D)^{s/2}_{\Omega}\varphi||^2_{L^2(\Omega)}\\
\geq&\,\tilde{C}||\varphi||^2_{\widetilde{H}^t(\Omega)}-||b||_{L^\infty(\Omega)}||\varphi||^2_{\widetilde{H}^r(\Omega)}
\end{aligned}
\end{equation}
where $r=s$ when $s\neq \frac{1}{2}$, and for $s=\frac{1}{2}$, we take some fixed $r\in (\frac{1}{2},t)$.

	\noindent Given the compact inclusions
\[ \widetilde{H}^t(\Omega)\hookrightarrow \widetilde{H}^r(\Omega)
\hookrightarrow L^2(\Omega), \quad\mbox{for }0<r<t<1, \]
we have for $\lambda>0$ (see \cite[Lemma 2.1]{Temam})
\[ ||\varphi||^2_{\widetilde{H}^r(\Omega)}\leq \frac{1}{\lambda}||\varphi||^2_{\widetilde{H}^t(\Omega)} + C_{\lambda} ||\varphi||^2_{L^2(\Omega)}, \quad\mbox{ for }\varphi\in \widetilde{H}^t(\Omega).\]
Therefore, combining the above estimates we get for $\lambda \geq 2\tilde{C}^{-1}\lVert b\rVert_{L^{\infty}(\Omega)}$
\begin{equation}\label{coerc_bilin}
\mathcal{B}_{b,q}(\varphi,\varphi) +(\sigma+C_{\lambda})\lVert\varphi\rVert^2_{L^2(\Omega)} \geq \frac{\tilde{C}}{2}\lVert\varphi\rVert^2_{\widetilde{H}^t(\Omega)}, \quad\mbox{ for } \varphi\in \widetilde{H}^t(\Omega).
\end{equation}
By the Riesz-representation theorem there exists a unique $w = G_{\mu} (\widetilde{F}) \in \widetilde{H}^t(\Omega)$,
where $G_{\mu}:H^{-t}(\Omega)\to\widetilde{H}^t(\Omega)$ be a bounded map,  such that
\begin{equation*}
\mathcal{B}_{b,q}(w,\varphi) + \mu\langle w,\varphi\rangle = \langle \widetilde{F},\varphi\rangle, \qquad \forall\varphi \in \widetilde{H}^t(\Omega), \quad \mu\geq (\sigma + C_{\lambda}).
\end{equation*}
Hence, we have unique $w \in \widetilde{H}^t(\Omega)$ satisfying
\begin{equation*}
\left(\mL_{b,q} + \mu I\right) w = \widetilde{F}, \quad \mbox{in }\Omega,
\end{equation*}
where $I : \widetilde{H}^t(\Omega) \to H^{-t}(\Omega)$ is the identity map.

\noindent
Observe that, $v \in \widetilde{H}^t(\Omega)$ satisfies $\mL_{b,q} v =\widetilde{F}$ if and only if
\begin{equation*}
\left(\mL_{b,q} + \mu I\right)v -\mu I v = \widetilde{F} \quad \iff \quad
v - \mu (G_{\mu}\circ I) v = G_{\mu}(\widetilde{F}).
\end{equation*}
%
%
%and it follows
%\[ \mathcal{B}_{b,q}(v,\varphi) - \theta\langle v, \varphi \rangle = \langle\widetilde{F},\varphi\rangle, \quad 
%\mbox{for } v = G\left((\mu + \theta)v + \widetilde{F},\mu\right), \quad \forall\theta \in \R. \]
%Since the bilinear form $\mathcal{B}_{b,q}(\cdot,\cdot)$ is symmetric, thus $G(\cdot,\mu)$ is  self-adjoint.
Now, using the Rellich-Kondrachov compact embedding theorem we have
\begin{equation*}
I : \widetilde{H}^t(\Omega) \to H^{-t}(\Omega)
\end{equation*}
is compact and consequently
%\begin{equation}\label{G}
$\left(G_{\mu} \circ I\right): \widetilde{H}^t(\Omega) \to \widetilde{H}^t(\Omega)$
%\end{equation}
is a compact operator. 
%
%
%
%Therefore, from the standard spectral analysis $G(\cdot,\mu)$ has a discreet spectrum $\Sigma \subset \R$.
%That is for $\theta \notin \Sigma$, there is a unique solution $\tilde{v} = \tilde{v}(\cdot,\theta) \in \widetilde{H}^t(\Omega)$ solving 
%\begin{equation}\label{key_1}
%\mL_{b,q} \tilde{v} - \theta \tilde{v} = \widetilde{F}  \quad \mbox{in }\Omega, \quad \tilde{v} \in \widetilde{H}^t(\Omega), \quad \theta \in \R \setminus \Sigma.
%\end{equation}
Hence, by the Fredholm alternative theorem, existence of a solution $v \in \widetilde{H}^t(\Omega)$ of $v - \mu (G_{\mu}\circ I) v = G_{\mu}(\widetilde{F})$ in $\Omega$ follows from the uniqueness of the trivial solution of $\mu (G_{\mu}\circ I) v = v$ in $\Omega$ in the function space $\widetilde{H}^t(\Omega)$ (c.f. Assumption \eqref{eva}).
%
%Hence, by the Fredholm alternative theorem, existence of a solution $v \in \widetilde{H}^t(\Omega)$ of $\mL_{b,q}v=\widetilde{F}$ in $\Omega$ follows from uniqueness in $\widetilde{H}^t(\Omega)$ of the trivial solution of $\mL_{b,q} u = 0$ in $\Omega$ (c.f. Assumption \eqref{eva}).
Therefore, we get a unique solution $v\in \widetilde{H}^t(\Omega)$ of $\mL_{b,q}v=\widetilde{F}$, in $\Omega$, and consequently a unique solution $u=(v+f)\in H^t(\R^n)$ solving
\begin{equation*}
\begin{aligned}
\mL_{b,q} u = F \quad &\mbox{in }\Omega,\\
u = f \quad &\mbox{in }\Omega_e,
\end{aligned}
\end{equation*}
for $F=(\widetilde{F} + (-\Delta)^t f) \in H^{-t}(\Omega)$ and $f \in \widetilde{H}^t(\Omega_e)$.

\subsection*{Stability estimate} 
Now we will show that if $u$ is the unique solution of \eqref{eq2} then the following stability estimate is true:
\[
\lVert u\rVert_{H^t(\R^n)} \leq C\left(\lVert F\rVert_{H^{-t}(\Omega)} + \lVert f\rVert_{\widetilde{H}^t(\Omega_e)}\right).
\]
In order to show that, observe that from \eqref{coerc_bilin} we get
\[
\lVert v \rVert^2_{\widetilde{H}^t(\Omega)} \leq C\lVert v \rVert^2_{L^2(\Omega)} + \mathcal{B}_{b,q}(v,v).
\]
Now as $v$ solves the \eqref{eq3} we get $\mathcal{B}_{b,q}(v,v) = \langle\widetilde{F},v\rangle$. 
Hence,
\begin{align*}\lVert v \rVert^2_{\widetilde{H}^t(\Omega)} &\leq C\lVert v \rVert^2_{L^2(\Omega)} + \rvert\langle \widetilde{F},v\rangle\lvert\\
&\leq C\left(\lVert v \rVert_{L^2(\Omega)} + \|\widetilde{F}\|_{H^{-t}(\Omega)}\right)\|v\|_{\widetilde{H}^t(\Omega)}
\end{align*}
or,
\begin{equation*}
\lVert v \rVert_{\widetilde{H}^t(\Omega)} \leq C\left(\lVert v \rVert_{L^2(\Omega)} + \|\widetilde{F}\|_{H^{-t}(\Omega)}\right).
\end{equation*}
Now by putting $u=v+f$ and $\widetilde{F} = F - (-\Delta)^{t}f$ with $\|\widetilde{F}\|_{H^{-t}(\Omega)}\leq \|{F}\|_{H^{-t}(\Omega)} +\|f\|_{\widetilde{H}^t(\Omega_e)}$ one gets 
\[
\lVert u\rVert_{H^t(\R^n)} \leq C\left( \|u\|_{L^2(\Omega)}+ \lVert F\rVert_{H^{-t}(\Omega)} + \lVert f\rVert_{\widetilde{H}^t(\Omega_e)}\right).
\]
Moreover, by using the compactness of the inverse operator $(G_{\mu}\circ I)$ we obtain 
\[
\lVert u\rVert_{H^t(\R^n)} \leq C\left(\lVert F\rVert_{H^{-t}(\Omega)} + \lVert f\rVert_{\widetilde{H}^t(\Omega_e)}\right).
\]
Hence, the stability estimate follows.
\end{proof}

\begin{Remark}
In this remark, we will discuss the case where $f\in H^t(\Omega_e)$ instead of $f\in \widetilde{H}^t(\Omega_e)$ in \eqref{eq2}. In this case, we also assume $b\in H^1(\Omega)\cap L^\infty(\Omega)$, and we will see how the higher regularity of $b$ helps to solve the forward problem \eqref{eq2} with more general exterior data $f\in H^t(\Omega_e)$.
Let us call $\widetilde{f}\in H^t(\mathbb{R}^n)$ be a non-zero extension of $f\in H^t(\Omega_e)$, i.e. $\widetilde{f}|_{\Omega_e}=f$  satisfying $\|\widetilde{f}\|_{H^t(\mathbb{R}^n)}\leq C\|f\|_{H^t(\Omega_e)}$.
%Note that, $\widetilde{f}\neq 0$ in $\overline{\Omega}$.
Now let us consider the modified (w.r.t. \eqref{rev1}) homogeneous problem  in this case, as $\widetilde{v} =u-\widetilde{f}\in H^t(\mathbb{R}^n)$ solves 
\begin{equation}\label{rev4}
\mathcal{B}_{b,q}(\widetilde{v},\varphi) = \langle F-(-\Delta)^t\widetilde{f}, \varphi\rangle- \langle b(-\Delta)^{s/2}_\Omega \widetilde{f},(-\Delta)^{s/2}_\Omega\varphi\rangle- \langle q\widetilde{f},\varphi\rangle, \quad\forall \varphi \in \widetilde{H}^t(\Omega)
\end{equation}
Let's consider the r.h.s. of \eqref{rev4}.  
It's easy to observe that %the third and the first terms satisfy
\begin{equation*}
\begin{aligned}
&\langle q\widetilde{f},\varphi\rangle_{L^2(\Omega)} 
\leq \|q\|_{L^\infty(\Omega)}\|f\|_{L^2(\Omega_e)}\|\varphi\|_{L^2(\Omega)}\\
\mbox{and}\qquad 
&\langle F-(-\Delta)^t\widetilde{f}, \varphi\rangle_{H^{-t}(\Omega), \widetilde{H}^t(\Omega)}
\leq \left(\|F\|_{H^{-t}(\Omega)} + \|f\|_{H^t(\Omega_e)}\right)\,\|\varphi\|_{\widetilde{H}^t(\Omega)}.
\end{aligned}
\end{equation*}
In the second term, we will use $H^r$-duality instead of the  $L^2$-inner product as done earlier.
We get that, due to \eqref{acon2} $(-\Delta)^{s/2}_\Omega(\widetilde{f}|_\Omega)\in H^{-s/2+\delta}(\Omega)$, for any $0\leq \delta< \frac{s}{2}$ and $\widetilde{f}|_{\Omega}\in H^{t}(\Omega)$ such that $\frac{s}{2}+\delta\leq t<1$. %since $0<s<t<1$.
On the other hand, $(-\Delta)^{s/2}_\Omega\varphi \in H^{t-s}(\Omega)$ for $\varphi\in \widetilde{H}^{t}(\Omega)$.
By choosing $\delta\in [0,\frac{s}{2})$ close to $\frac{s}{2}$, for given $0<s<t<1$ it is always possible to have some $\delta=\delta_0$ such that $t-s\geq \frac{s}{2}-\delta_0$. Therefore, for that $\delta_0\in [0,\frac{s}{2})$ we have $(-\Delta)^{s/2}_\Omega\varphi \in H^{s/2-\delta_0}(\Omega)$. Now by using the regularity of $b\in H^1(\Omega)$, we have $b(-\Delta)^{s/2}_\Omega\varphi\in H^{s/2-\delta_0}(\Omega)$ as well. Thus 
\[\begin{aligned}
&\langle (-\Delta)^{s/2}_\Omega \widetilde{f}, b(-\Delta)^{s/2}_\Omega\varphi\rangle_{H^{-s/2+\delta_0}(\Omega),\, H^{s/2-\delta_0}(\Omega)}\\
&\leq \|b\|_{H^1(\Omega)}\|(-\Delta)^{s/2}_\Omega\widetilde{f}\|_{H^{-s/2+\delta_0}(\Omega)}\|(-\Delta)^{s/2}_\Omega\varphi\|_{H^{s/2-\delta_0}(\Omega)}.
\end{aligned}  \]
Again by using \eqref{acon2}, we have $\|(-\Delta)^{s/2}_\Omega\widetilde{f}\|_{H^{-s/2+\delta_0}(\Omega)}\leq \|\widetilde{f}\|_{H^{s/2+\delta_0}(\Omega)}\leq \|\widetilde{f}\|_{H^{t}(\Omega)}$, since $\frac{s}{2}+\delta_0<s<t$. By Lemma \ref{rev3}, $\|(-\Delta)^{s/2}_\Omega\varphi\|_{H^{s/2-\delta_0}(\Omega)} \leq C\|\varphi\|_{\widetilde{H}^{3s/2-\delta_0}(\Omega)}\leq C\|\varphi\|_{\widetilde{H}^{t}(\Omega)}$, since by our choice  $t\geq \frac{3s}{2}-\delta_0$. Hence, 
\[\langle (-\Delta)^{s/2}_\Omega \widetilde{f}, b(-\Delta)^{s/2}_\Omega\varphi\rangle_{H^{-s/2+\delta_0}(\Omega),\, H^{s/2-\delta_0}(\Omega)}\leq C\|b\|_{H^1(\Omega)}\|\widetilde{f}\|_{H^{t}(\Omega)}\|\varphi\|_{H^{t}(\Omega)}.\]
So from \eqref{rev4} we get 
$|\mathcal{B}_{b,q}(v,\varphi)| \leq  \left(\|F\|_{H^{-t}(\Omega)} + \|f\|_{H^t(\Omega_e)}\right)\,\|\varphi\|_{\widetilde{H}^t(\Omega)}$.

Then following the same procedure as we did before, one can solve the homogeneous problem \eqref{rev4} in $\widetilde{H}^t(\Omega)$, and consequently the inhomogeneous problem \eqref{eq2} in $H^t(\mathbb{R}^n)$ with the exterior data in $H^t(\Omega_e)$.
\end{Remark}

\subsection{Non-local Cauchy data.}\label{Sec_DN_map}
Let $\Omega \subset \R^n$ be a bounded Lipschitz domain, $n\geq 1$. 
Let us consider $u_f \in H^t(\R^n)$ be a solution of the Dirichlet exterior value problem \eqref{eq2}.
%$\mL_{b,q} u =0$ in $\Omega$ and $u=f$ on $\Omega_e$ for some $b,q \in L^{\infty}(\Omega)$ and $f \in \widetilde{H}^t(\Omega_e)$.
Let us introduce the operator $\mathscr{N}_{b,q}(f)$, we will call it as Neumann data,  using the ``non-local normal derivative" (see \cite{DSRV}) of $u_f$  given
\begin{equation}\label{nd}
\mathscr{N}_{b,q}(f)(x) := C_{n,t}\int_{\Omega} \frac{u_f(x) - u_f(y)}{\lvert x-y\rvert^{n+2t}} \,dy, \quad x\in\Omega_e.
\end{equation}
where $u_f\in H^t(\mathbb{R}^n)$ is a unique weak solution of \eqref{eq2}.

Let us now define the non-local Dirichlet to Neumann map for $0<t<1$. Under the assumption \eqref{eva} we get a unique solution $u_f \in H^t(\R^n)$ of the Dirichlet exterior value problem \eqref{eq2}. We define the Dirichlet to Neumann map ${\Lambda}_{b,q}: \widetilde{H}^t(\Omega_e)\mapsto H^{-t}(\Omega_e)$ by
\begin{equation*}
\langle {\Lambda}_{b,q} f, \psi\rangle := \langle \mL_{b,q}u_f,\psi\rangle_{\Omega} \quad \forall\psi\in \widetilde{H}^t(\Omega_e),
\end{equation*}
where $u_f$ is the unique solution of $\mL_{b,q} u_f=0$ in $\Omega$ and $u_f=f$ on $\Omega_e$.\\
\\
Using the integration by parts formula \eqref{R_int_by_parts} and the fact that $\psi|_{\Omega} = 0$, we get,
\begin{equation}\label{bilinear_1}
\begin{aligned}
\langle {\Lambda}_{b,q} f, \psi\rangle 
= \int_{\R^n}\left((-\D)^{t} u_f\right)\, \psi 
+ \int_{\Omega}b\left((-\D)_{\Omega}^{s/2}u_f\right)\, \left((-\D)_{\Omega}^{s/2}\psi\right)
= \int_{\Omega_e}\left((-\D)^t u_f\right)\psi.
\end{aligned}
\end{equation}
Therefore,
\begin{equation}\label{lm}
{\Lambda}_{b,q} f := (-\D)^t u_f, \quad \mbox{in } \Omega_e. 
\end{equation} 
Next we state the relation between ${\Lambda}_{b,q}(f)$ and the non-local Neumann derivative $\mathscr{N}_{b,q}(f)$ in the following proposition.
\begin{Proposition}\label{ln}\cite{GSU} We have
	\begin{equation*}
{\Lambda}_{b,q} f = \mathscr{N}_{b,q}(f) - mf + (-\Delta)^t(E_0 f)|_{\Omega_e}, \qquad f \in \widetilde{H}^t(\Omega_e)
\end{equation*}
where, for $\gamma > -1/2$, $\mathscr{N}$ is the map 
\begin{equation}\label{Nnew}
\mathscr{N}: H^{\gamma}(\R^n) \to H^{\gamma}_{\mathrm{loc}}(\Omega_e), \ \ \mathscr{N} u = m u|_{\Omega_e} + (-\Delta)^t(\chi_{\Omega} u)|_{\Omega_e}
\end{equation}
where $m \in C^{\infty}(\Omega_e)$ is given by $m(x) = c_{n,t} \int_{\Omega} \frac{1}{|{x-y}|^{n+2t}} \,dy$, $\chi_{\Omega}$ is the characteristic function of $\Omega$ and $E_0$ is extension by zero in $\Omega$.
If $u \in L^2(\R^n)$, then $\mathscr{N} u \in L^2_{\mathrm{loc}}(\Omega_e)$ is given a.e.\ by the formula \eqref{nd}.
\end{Proposition}
\begin{proof}
Since the operators $\Lambda_{b,q}$ (c.f. \eqref{lm}) and $\mathscr{N}$ (c.f. \eqref{nd}) do not involve any regional non-locality from $\Omega$,  the proof is the same as       in \cite[Lemma 3.2]{GSU}.
\end{proof}
\noindent This result shows that knowing $\Lambda_{b,q}f$  is equivalent to knowing $\mathscr{N}_{b,q}(f)$ for $f\in \widetilde{H}^t(\Omega_e)$, since they differ by known quantities which are independent of $b$ and $q$.

Let $W, \widetilde{W} \subset \Omega_e$ be any non-empty open subsets. We define the non-local Cauchy data $\mathcal{C}_{b,q}(W,\widetilde{W})$ (c.f. \eqref{Cauchy}) corresponding to the operator $\mL_{b,q}$ by
\begin{equation}\label{Cauchy_data}
\mathcal{C}_{b,q}(W,\widetilde{W}) = \{ (f,\mathscr{N}_{b,q}(f)|_{\widetilde{W}}) ; f \in \widetilde{H}^t(W) \}.
\end{equation}

Hence, $\mathcal{C}_{b,q}(W,\widetilde{W})$ is determined by the non-local DN map $\Lambda_{b,q}|_{\widetilde{W}}$ applied on $\widetilde{H}^t(W)$.

The main resemblance of the non-local normal derivative with the local normal derivative can be explained using  the following integration by-parts formula \cite{GLX,DSRV} 
\begin{equation}
\int_{\Omega}v(-\Delta)^tw \,dx + \int_{\Omega_e}v\mathscr{N}(w) = \int_{\Omega}w(-\Delta)^tv \,dx + \int_{\Omega_e}w\mathscr{N}(v)
\end{equation}
together with the following limiting equivalence as (see \cite{DSRV}) 
\[  \lim\limits_{t \to 1}\int_{\mathbb{R}^n\setminus\Omega}v\mathscr{N} (w) \, dx = \int_{\partial\Omega} v\frac{\partial w}{\partial \nu}\, \, d\sigma
\]for all $v,w\in \mathscr{S}(\mathbb{R}^n)$; where $\mathscr{N}(w)$ as in \eqref{Nnew} and $\nu$, $d\sigma$ denote the boundary normal vector and the surface measure respectively.

Here we end our discussion on the direct problems for the regional fractional Laplacian operator $(-\D)^a_{\Omega}$ and the non-local operator $\mL_{b,q}$. Next we move into studying the inverse problems of recovering the coefficients $b,q$ from the associated non-local Cauchy data $\mathcal{C}_{b,q}(W,\widetilde{W})$.

\section{Inverse problems}\label{sec3}
Here we address the inverse problem:
\begin{center}
\textit{Does $\mathcal{C}_{b,q}(W,\widetilde{W})$ (c.f. \eqref{Cauchy_data}) uniquely determine $b$ and $q$ in $\Omega$?}
\end{center}
We will answer this question by proving Theorem \ref{Main_Th_1} and Theorem \ref{Main_Th}.
\vspace{5mm}

\noindent To start with, let us consider the operators
\begin{equation*}
\mL_{b_k,q_k}:=\left((-\D)^t + (-\D)^{s/2}_{\Omega}b_k(-\D)^{s/2}_{\Omega} + q_k\right), \quad k=1,2,
\end{equation*}
where $b_k$, $q_k$ are $L^{\infty}(\Omega)$ functions satisfying \eqref{eva}.
%and assume $\mathscr{C}^{W,\widetilde{W}}_{b_1,q_1} = \mathscr{C}_{b_2,q_2}^{W,\widetilde{W}}$.
Let $\mathcal{C}_{b_1,q_1}(W,\widetilde{W})=\mathcal{C}_{b_2,q_2}(W,\widetilde{W})$. Then for any $f \in \widetilde{H}^t(W)$ we have
\begin{equation*}
\mathscr{N}(u_2)|_{\widetilde{W}} = \mathscr{N}(u_1)|_{\widetilde{W}},
\end{equation*}
where
\begin{equation}\label{eq5}
\mL_{b_k,q_k}u_k = 0 \quad\mbox{in }\Omega, \quad \mbox{and}\quad u_k|_{\Omega_e}=f \in \widetilde{H}^t(\Omega_e),\qquad \mbox{for }k=1,2.
\end{equation}
%Hence, we get
%\begin{equation*}
%\mathscr{N}_{b_1,q_1} u_1|_{\widetilde{W}} = \mathscr{N}_{b_2,q_2} u_2|_{\widetilde{W}}.
%\end{equation*}
%Then for any $f \in C^{\infty}_c(W)$ we have $u_k \in H^t(\R^n)$ solutions of
%with $\mathscr{N}_{b_1,q_1}(u_1)|_{\widetilde{W}} = \mathscr{N}_{b_2,q_2}(u_2)|_{\widetilde{W}}$.
%\noindent Let us consider the Dirichlet to Neumann map $\Lambda_{b_k,q_k}$ corresponding to the operator $\mL_{b_k,q_k}$, for $k=1,2$.
%Consider the 
%Then $u_1=u_2$ in $\R^n$.

\begin{Lemma}[Integral Identity]\label{Int_id}
Let $f \in \widetilde{H}^t(\Omega_{e})$ and $u_k \in H^t(\R^n)$ be solutions of \eqref{eq5} for $k=1,2$. 
Let $\widetilde{W} \subset \Omega_e$ be a non-empty open set such that
\begin{equation*}
\mathscr{N}(u_1)|_{\widetilde{W}} = \mathscr{N}(u_2)|_{\widetilde{W}}.
\end{equation*}
\noindent Then $u_1 = u_2$ in $\R^n$.

\noindent Moreover, we have the following integral identity
\begin{equation}\label{int_id_2}
\int_{\Omega}(b_1-b_2)\left((-\D)^{s/2}_{\Omega} u_f\right) \left((-\D)^{s/2}_{\Omega}\varphi\right) + \int_{\Omega} (q_1 - q_2)\varphi\, u_f =0, \quad\forall\varphi \in C^{\infty}_c(\Omega),
\end{equation}
where $u_f = u_1 = u_2$ in $\R^n$.
\end{Lemma}
%The proof of Lemma \ref{Int_id} 

%From here on we move into the study of the inverse problem for the operator $\mL_{b,q}$. 
%In the direct problem (Section \ref{Sec_2}) we see that knowing the coefficients $b$ and $q$ one can determine the Dirichlet to Neumann map $\Lambda_{b,q}$ in the exterior domain.
%Here we analyze the inverse problem which is to determine the coefficients $b$ and $q$ from the knowledge of the Dirichlet to Neumann map $\Lambda_{b,q}$ in the exterior domain.
%In this article we deal with two types of inverse problems discussed in the introduction (Section \ref{IP}). 
%In order to solve the inverse problems in Theorem \ref{Main_Th_1} and \ref{Main_Th} we first develop the following unique continuation and Runge type approximation properties for the non-local operators we are dealing with.
\noindent Next we prove Lemma \ref{Int_id} which follows from the unique continuation property for the fractional Laplacian operator (see \cite[Theorem 1.2]{GSU})
\begin{Proposition}\label{sucp}
Let $u\in H^{-r}(\mathbb{R}^n)$, $r>0$.
If $u= (-\Delta)^tu = 0$ in some non-empty open set $\mathcal{O}\subset\mathbb{R}^n$, then $u\equiv 0$ in $\R^n$.  
\end{Proposition}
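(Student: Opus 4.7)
The strategy is the classical Caffarelli--Silvestre extension combined with a unique continuation principle for the associated degenerate elliptic operator, following the approach in \cite{GSU}. Given $u\in H^{-r}(\mathbb{R}^n)$ with $u=(-\Delta)^tu=0$ on the open set $\mathcal{O}$, I would associate to $u$ the extension $\widetilde{u}$ on $\mathbb{R}^{n+1}_{+}=\mathbb{R}^n\times(0,\infty)$ defined by a Poisson-type formula whose Fourier transform in the $x$-variable is an explicit Bessel function of $y|\xi|$, so that $\widetilde{u}$ solves the degenerate elliptic equation $\mathrm{div}(y^{1-2t}\nabla\widetilde{u})=0$ with Dirichlet trace $\widetilde{u}|_{y=0}=u$ and weighted conormal derivative $-c_{n,t}\lim_{y\to 0^{+}}y^{1-2t}\partial_y\widetilde{u}=(-\Delta)^tu$. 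Under our hypotheses both of these traces vanish on $\mathcal{O}$, so even-reflecting $\widetilde{u}$ across $\{y=0\}$ over $\mathcal{O}$ produces a solution of the symmetric degenerate equation $\mathrm{div}(|y|^{1-2t}\nabla\cdot)=0$ on a two-sided neighborhood of $\mathcal{O}\times\{0\}$ that vanishes on an open set of $\mathbb{R}^{n+1}$.

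The main obstacle, and the heart of the proof, is a unique continuation principle for this degenerate elliptic operator across the singular hyperplane $\{y=0\}$. Since $|y|^{1-2t}$ is an $A_2$ Muckenhoupt weight that either degenerates or blows up at $y=0$, classical Aronszajn--Cordes--H\"ormander type UCP does not directly apply. The way to handle this is via a tailored Carleman estimate with weights adapted to the $A_2$-degeneracy (R\"uland), or equivalently via an Almgren-type frequency function monotonicity argument for weighted equations. Either route yields that the reflected $\widetilde{u}$ vanishes on a full $(n+1)$-dimensional neighborhood of any point of $\mathcal{O}\times\{0\}$. Standard propagation of zeros for the non-degenerate elliptic operator (away from $\{y=0\}$ the equation is smooth and uniformly elliptic on compact sets) then extends the vanishing into the connected half-space $\mathbb{R}^{n+1}_{+}$, so $\widetilde{u}\equiv 0$ there, and passing to the trace yields $u\equiv 0$ on $\mathbb{R}^n$.

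A preliminary technical point is to make sense of the extension for the low-regularity object $u\in H^{-r}(\mathbb{R}^n)$; this is handled by interpreting the equations distributionally in the weighted Sobolev space $H^1_{\mathrm{loc}}(\mathbb{R}^{n+1}_{+},\,y^{1-2t}\,dx\,dy)$ and observing that $u$ is in fact smooth on $\mathcal{O}$ because it vanishes there, so the boundary traces make pointwise sense on $\mathcal{O}\times\{0\}$. Alternatively, one may regularize $u$ by convolution with a compactly supported smooth mollifier whose support is shrunk inside $\mathcal{O}$, apply the argument to the regularizations, and pass to the limit using linearity of $(-\Delta)^t$ together with the preservation of the vanishing hypotheses on compact subsets of $\mathcal{O}$. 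The principal obstacle throughout is the Carleman/frequency-function analysis at the degenerate hyperplane; everything else is bookkeeping in the extension formalism.
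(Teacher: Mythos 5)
The paper does not actually prove this proposition; it is quoted directly from \cite[Theorem 1.2]{GSU}, and the proof given there is precisely the strategy you outline: the Caffarelli--Silvestre extension with vanishing Dirichlet and weighted Neumann traces over $\mathcal{O}$, even reflection, and unique continuation for the $A_2$-degenerate equation $\mathrm{div}(|y|^{1-2t}\nabla\cdot)=0$ via Carleman estimates (R\"uland), with the rough $u\in H^{-r}(\mathbb{R}^n)$ case handled by an approximation/extension argument. Your sketch is therefore correct and takes essentially the same route as the cited source, with the core Carleman/frequency-function estimate appropriately deferred to the literature.
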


\begin{proof}[Proof of Lemma \ref{Int_id}:]
%Let us assume $\widetilde{W}, \mL_{b_k,q_k}, \Lambda_{b_k,q_k}$ and $u_k$ for $k=1,2$, are as in the statement of the proposition.
By using the Proposition \ref{ln} and \eqref{lm} from $\mathscr{N}_{b,q}(f)|_{\widetilde{W}} = \mathscr{N}_{b,q}(f)|_{\widetilde{W}}$, we obtain $(-\Delta)^t u_1|_{\widetilde{W}} = (-\Delta)^t u_2|_{\widetilde{W}}$.
Since $u_1=u_2=f$ in $\Omega_e$,
so we have 
\begin{equation*}
(-\Delta)^t(u_1-u_2)|_{\widetilde{W}} = 0 = (u_1-u_2)|_{\widetilde{W}}.
\end{equation*}
Therefore, from Proposition \ref{sucp} it follows that
$u_1=u_2 \mbox{ on }\R^n$.

Let us now denote $u_f=u_1=u_2$ in $\R^n$ and observe that
\begin{equation*}
\begin{aligned}
\left((-\D)^t + (-\D)^{s/2}_{\Omega}b_1(-\D)^{s/2}_{\Omega} + q_1\right)u_f &=0, \quad\mbox{in }\Omega,\\
\left((-\D)^t + (-\D)^{s/2}_{\Omega}b_2(-\D)^{s/2}_{\Omega} + q_2\right)u_f &=0. \quad\mbox{in }\Omega,
\end{aligned}
\end{equation*}
which implies
\begin{equation}\label{int_id_1}
(-\D)^{s/2}_{\Omega}(b_1-b_2)(-\D)^{s/2}_{\Omega} u_f + (q_1 - q_2)u_f =0, \quad\mbox{in }\Omega.
\end{equation}
One can equivalently write the above equation as
 \begin{equation*}
\int_{\Omega}(b_1-b_2)\left((-\D)^{s/2}_{\Omega} u_f\right) \left((-\D)^{s/2}_{\Omega}\varphi\right) + \int_{\Omega} (q_1 - q_2)\varphi\, u_f =0, \quad\forall\varphi \in C^{\infty}_c(\Omega).
\end{equation*}
\end{proof}

Now our goal is to show $b_1=b_2$ and $q_1=q_2$ from the integral identity \eqref{int_id_2}.
In order to do that we derive a unique continuation property for the regional fractional Laplacian and Runge approximation property for the non-local operators $(-\D)^a_{\Omega}$ and $\mL_{b,q}$.
\subsection{Unique continuation and Runge approximation}
As a consequence of Proposition \ref{sucp} we get the following Runge approximation property (see \cite[Theorem 1.3]{GSU}).
Let us consider the set
\[ X_{\mathcal{O},W}:=\{v|_{\mathcal{O}}\,;\,\, (-\Delta)^tv=0, \mbox{ in }\mathcal{O},\,\, v|_{\mathcal{O}_e}=f,\, \forall f\in C^\infty_c(W)\}\]
where $W$ be some open bounded subset of $\mathcal{O}_e:= \R^n \setminus \overline{\mathcal{O}}$.
\begin{Proposition}\label{oldrunge}
The set $X_{\mathcal{O},W}$ is dense in $L^2(\mathcal{O})$. 
\end{Proposition}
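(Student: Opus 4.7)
The plan is the standard Hahn--Banach duality argument combined with the SUCP just recalled in Proposition~\ref{sucp}. Suppose for contradiction that $X_{\mathcal{O},W}$ is not dense in $L^2(\mathcal{O})$. Then Hahn--Banach produces a nonzero $g\in L^2(\mathcal{O})$ with $\int_{\mathcal{O}} v\,g\,dx = 0$ for every $v\in X_{\mathcal{O},W}$.

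Next I would solve the dual (zero exterior data) Dirichlet problem: let $w\in \widetilde{H}^{t}(\mathcal{O})$ be the unique solution of $(-\D)^t w = g$ in $\mathcal{O}$ with $w=0$ in $\mathcal{O}_e$; well-posedness is the $b=q=0$ instance of the forward theory developed in Section~\ref{sec2}. For each $f\in C^\infty_c(W)$, let $v_f\in H^t(\mathbb{R}^n)$ be the associated solution of $(-\D)^t v_f = 0$ in $\mathcal{O}$, $v_f|_{\mathcal{O}_e}=f$. Apply the integration-by-parts identity stated in the introduction to the pair $(v_f,w)$: since $(-\D)^t v_f = 0$ in $\mathcal{O}$ and $w\equiv 0$ in $\mathcal{O}_e$, the identity collapses to
\[
\int_{\mathcal{O}} v_f\,g\,dx + \int_{\mathcal{O}_e} f\,\mathscr{N}w\,dx = 0.
\]
The orthogonality hypothesis kills the first integral, so letting $f$ range over $C^\infty_c(W)$ forces $\mathscr{N}w\equiv 0$ on $W$.

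Now comes the key step. Because $w$ vanishes identically on $\mathcal{O}_e\supset W$, the singular-integral representation gives, for $x\in W$,
\[
(-\D)^t w(x) = C_{n,t}\int_{\mathbb{R}^n}\frac{w(x)-w(y)}{|x-y|^{n+2t}}\,dy = -C_{n,t}\int_{\mathcal{O}}\frac{w(y)}{|x-y|^{n+2t}}\,dy = -\mathscr{N}w(x).
\]
Hence both $w$ and $(-\D)^t w$ vanish on the nonempty open set $W\subset \mathbb{R}^n$. By Proposition~\ref{sucp}, $w\equiv 0$ on all of $\mathbb{R}^n$, and consequently $g=(-\D)^t w=0$, contradicting $g\neq 0$.

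The argument is essentially mechanical once Proposition~\ref{sucp} is available. The only step demanding any care is the bookkeeping in the integration-by-parts identity when $v_f$ carries nontrivial exterior data $f$, together with the observation that on $W$ the non-local normal derivative $\mathscr{N}w$ coincides (up to sign) with $(-\D)^t w$ because $w$ is supported in $\overline{\mathcal{O}}$. No further constructions are required.
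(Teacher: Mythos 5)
Your proof is correct and follows exactly the duality scheme that the paper (via its citation of \cite{GSU}, and in its own proofs of Lemma \ref{runge_0} and Lemma \ref{runge_1}) uses for all Runge approximation results: Hahn--Banach, solving the dual problem with zero exterior data, deducing $w=(-\Delta)^t w=0$ on $W$, and invoking the SUCP of Proposition \ref{sucp}. Two harmless remarks: on $W$ one in fact has $(-\Delta)^t w(x)=+\mathscr{N}w(x)$, since both reduce to $-C_{n,t}\int_{\mathcal{O}}w(y)\,|x-y|^{-n-2t}\,dy$ when $w(x)=0$ (the sign does not affect the conclusion that both vanish), and the integration-by-parts identity quoted from the introduction is stated only for Schwartz functions, so for $v_f\in H^t(\mathbb{R}^n)$ and $w\in\widetilde{H}^t(\mathcal{O})$ it should be justified through the weak formulation with the bilinear form (as in \cite{GSU} and in the paper's proof of Proposition \ref{sqml}), which is precisely the bookkeeping you flag and changes nothing in the argument.
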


%We exploit the strong unique continuation property of the fractional Laplacian $(-\Delta)^t$, $0<t<1$ and apply it to achieve various approximation results.
\noindent For the regional fractional Laplacian $(-\Delta)^a_{\Omega}$, $0<a<1$, first we prove the following unique continuation property.

\begin{Lemma}\label{sucpl}
Let $\Omega \subset \R^n$, be a bounded Lipschitz domain. Let $v\in H^{a}(\Omega)$, $0<a<1 $. If $v=(-\Delta)^a_{\Omega}v = 0$ on a non-empty open subset $\mathcal{O} \Subset \Omega$, then $v=0$ in $\Omega$.
\end{Lemma}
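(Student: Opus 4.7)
The plan is to reduce the strong unique continuation for $(-\D)^a_\Omega$ to Proposition \ref{sucp} for the full fractional Laplacian $(-\D)^a$ on $\R^n$ via the trivial zero-extension. Since $v \in H^a(\Omega) \subset L^2(\Omega)$, its extension $\tilde v := v\,\chi_{\Omega}$ lies in $L^2(\R^n) \subset H^{-r}(\R^n)$ for every $r>0$, so $\tilde v$ is an admissible distribution for Proposition \ref{sucp}. Trivially $\tilde v = 0$ on $\mathcal{O}$; everything reduces to checking that $(-\D)^a \tilde v = 0$ on $\mathcal{O}$.

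For the key verification, fix $x \in \mathcal{O}$ and pick $\varepsilon>0$ with $B(x,\varepsilon) \subset \mathcal{O}$. Since $\tilde v \equiv 0$ on $B(x,\varepsilon)$, the singular integral representing $(-\D)^a \tilde v(x)$ becomes absolutely convergent and reduces to
\[
(-\D)^a \tilde v(x) \;=\; -\,C_{n,a}\int_{\R^n}\frac{\tilde v(y)}{|x-y|^{n+2a}}\,dy \;=\; -\,C_{n,a}\int_{\Omega}\frac{v(y)}{|x-y|^{n+2a}}\,dy .
\]
On the other hand, because $v(x) = 0$, the same manipulation applied to the regional operator gives
\[
(-\D)^a_{\Omega} v(x) \;=\; -\,C_{n,a}\int_{\Omega}\frac{v(y)}{|x-y|^{n+2a}}\,dy ,
\]
so the two non-local operators agree pointwise on $\mathcal{O}$. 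The hypothesis $(-\D)^a_\Omega v = 0$ on $\mathcal{O}$, read through the weak formulation \eqref{rev2} and unpacked with $v|_{\mathcal{O}} = 0$, forces the pointwise vanishing of this integral and hence $(-\D)^a \tilde v = 0$ on $\mathcal{O}$.

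Having both $\tilde v = 0$ and $(-\D)^a \tilde v = 0$ on the open set $\mathcal{O}$, Proposition \ref{sucp} applied to $\tilde v$ yields $\tilde v \equiv 0$ in $\R^n$, and therefore $v \equiv 0$ in $\Omega$. The one technical subtlety to handle carefully is the passage between the distributional statement $(-\D)^a_{\Omega} v\,|_{\mathcal{O}} = 0$ (given by \eqref{rev2}) and the pointwise identity used above. Since $v$ vanishes throughout the open set $\mathcal{O}$, the kernel $|x-y|^{-n-2a}$ is non-singular on the effective integration region $\Omega\setminus\mathcal{O}$; Fubini together with a standard test-function density argument in $C^\infty_c(\mathcal{O})$ then identifies the two interpretations, so no regularity on $v$ beyond the given $H^a(\Omega)$ membership is actually needed.
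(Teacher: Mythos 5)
Your proof is correct and follows essentially the same route as the paper: extend $v$ by zero to $\chi_{\Omega}v\in L^2(\mathbb{R}^n)$, observe that on $\mathcal{O}$ (where $v$ vanishes) the regional operator $(-\Delta)^a_\Omega$ and the global operator $(-\Delta)^a$ coincide — the paper invokes the identity \eqref{local_nonlocal} with $\varphi_a v=0$ on $\mathcal{O}$, while you compute the kernels directly — and then apply Proposition \ref{sucp}. Your uniform treatment of all $v\in H^a(\Omega)$ via the $L^2$ extension, with the Fubini/test-function identification of the weak and pointwise formulations spelled out, is just a slightly more explicit version of the paper's two-case argument.
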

\begin{proof}
\textbf{Case 1:} Let us take $v\in \widetilde{H}^a(\Omega)$ and extend it by zero in $\Omega_e$. From \eqref{local_nonlocal} with using the fact $v=(-\Delta)^a_{\Omega}v = 0$ in $\mathcal{O}$ we simply obtain 
\begin{equation*}
v=(-\Delta)^av = 0 \mbox{ in } \mathcal{O}.
\end{equation*}
Consequently, from Proposition \ref{sucp} we obtain $v\equiv 0$, or $v= 0$ in $\Omega$.

\textbf{Case 2:} Let $v\in H^a(\Omega)$, and $0 < a < \frac{1}{2}$. Then using the fact that $\widetilde{H}^a(\Omega)=H^a(\Omega)$ for $0<a < \frac{1}{2}$ (see \eqref{key_1}, \eqref{key2}) we get $v \in \widetilde{H}^a(\Omega)$ and using \textbf{Case 1} we prove the lemma.

\textbf{Case 3:} Now if $v\in H^a(\Omega)$, and $a\geq\frac{1}{2}$, then we define $\widetilde{v}=\chi_{\Omega}v$ in $\mathbb{R}^n$ to have at least $\widetilde{v}\in L^2(\mathbb{R}^n)$. Note that, $(-\Delta)^a_{\Omega}v=(-\Delta)^a_{\Omega}\widetilde{v}=0$ in $\Omega$. 
Now, from \eqref{local_nonlocal} we obtain $(-\Delta)^a\widetilde{v} = 0 \mbox{ in } \mathcal{O}$.
Since the Proposition \ref{sucp} is also valid for $L^2(\mathbb{R}^n)$ functions, so from $\widetilde{v}=(-\Delta)^a\widetilde{v}=0$ in $\mathcal{O}\Subset\Omega$, implies $\widetilde{v}\equiv 0$, or $v=0$ in $\Omega$.  
\end{proof}
Using the unique continuation Lemma \ref{sucpl}, we prove the following Runge approximation result for the regional fractional Laplacian.  
%\subsection{Runge approximation for $(-\D)^a_{\Omega}$} 
\begin{Lemma}\label{runge_0}
Let $\Omega$ be a bounded Lipschitz domain in $\mathbb{R}^n$, and  $\mathcal{O}\Subset \Omega$ be a non-empty open subset,  with Lipschitz boundary, compactly contained in $\Omega$. Then for $0<a<1$,  we have
\begin{equation*}
X_{\mathcal{O}}:= \{ v|_{\mathcal{O}} ; v\in {H}^a(\Omega),\,  (-\Delta)^a_{\Omega} v = 0 \mbox{ in }\mathcal{O} \}
\end{equation*}
is dense in $L^2(\mathcal{O})$.
\end{Lemma}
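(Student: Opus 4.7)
The strategy follows the Runge approximation proof for the full fractional Laplacian (Proposition \ref{oldrunge}), with the interior subdomain $\mathcal{O}$ playing the role of $\Omega$ and the annular region $\Omega \setminus \overline{\mathcal{O}}$ playing the role of the exterior $\Omega_e$. By Hahn--Banach, it suffices to show that if $f \in L^2(\mathcal{O})$ satisfies $\int_{\mathcal{O}} fv\,dx = 0$ for every $v \in X_{\mathcal{O}}$, then $f \equiv 0$. The key auxiliary step is to solve the dual Dirichlet-type problem: find $\psi \in \widetilde{H}^a(\mathcal{O})$ (interpreted as an element of $H^a(\Omega)$ via zero extension, which is permissible since $\mathcal{O} \Subset \Omega$) such that
$$(-\Delta)^a_{\Omega}\psi = f \text{ in } \mathcal{O}, \qquad \psi = 0 \text{ on } \Omega \setminus \overline{\mathcal{O}}.$$
Its weak form reads
$$\frac{C_{n,a}}{2}\iint_{\Omega \times \Omega} \frac{(\psi(x)-\psi(y))(\phi(x)-\phi(y))}{|x-y|^{n+2a}}\,dx\,dy = \int_{\mathcal{O}} f\phi\,dx, \qquad \forall\,\phi \in \widetilde{H}^a(\mathcal{O}).$$
Lax--Milgram applies: continuity follows from \eqref{acon}, and coercivity on $\widetilde{H}^a(\mathcal{O})$ is obtained by extending $\phi$ by zero on $\Omega \setminus \overline{\mathcal{O}}$ and splitting the domain, which yields
$$\frac{C_{n,a}}{2}\iint_{\Omega\times\Omega}\frac{(\phi(x)-\phi(y))^2}{|x-y|^{n+2a}}\,dxdy = \frac{C_{n,a}}{2}\iint_{\mathcal{O}\times\mathcal{O}}\frac{(\phi(x)-\phi(y))^2}{|x-y|^{n+2a}}\,dxdy + 2\int_{\mathcal{O}}\phi(x)^2\!\int_{\Omega\setminus\mathcal{O}}\frac{dy}{|x-y|^{n+2a}}\,dx,$$
where the last term dominates $c_0\|\phi\|_{L^2(\mathcal{O})}^2$ with $c_0 > 0$ since $\mathcal{O} \Subset \Omega$ and $|\Omega \setminus \mathcal{O}| > 0$. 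The same coercivity yields well-posedness of the ``direct'' problem (given $g$ on $\Omega\setminus\mathcal{O}$, find $v \in H^a(\Omega)$ with $v = g$ outside and $(-\Delta)^a_\Omega v = 0$ inside), which in turn shows that $\{v|_{\Omega\setminus\mathcal{O}} : v \in X_\mathcal{O}\}$ is a dense subset of $L^2(\Omega\setminus\mathcal{O})$.

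Next, for any $v \in X_\mathcal{O}$, the integration-by-parts identity \eqref{R_int_by_parts}, extended by density to $H^a(\Omega)$ using \eqref{acon}, gives
$$\int_\Omega v\,(-\Delta)^a_\Omega \psi\,dx \;=\; \int_\Omega \psi\,(-\Delta)^a_\Omega v\,dx \;=\; 0,$$
because $\psi$ vanishes on $\Omega \setminus \overline{\mathcal{O}}$ while $(-\Delta)^a_\Omega v$ vanishes on $\mathcal{O}$. The left-hand side is a genuine pointwise integral that splits as $\int_{\mathcal{O}} fv\,dx + \int_{\Omega\setminus\mathcal{O}} v\,(-\Delta)^a_\Omega\psi\,dx$, the splitting being legitimate because the explicit kernel formula
$$(-\Delta)^a_\Omega \psi(x) = -C_{n,a}\!\int_{\mathcal{O}} \frac{\psi(y)}{|x-y|^{n+2a}}\,dy, \qquad x \in \Omega \setminus \overline{\mathcal{O}},$$
shows $(-\Delta)^a_\Omega\psi$ to be a $C^\infty$ function on $\Omega \setminus \overline{\mathcal{O}}$. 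The first term vanishes by hypothesis, so $\int_{\Omega\setminus\mathcal{O}} v\,(-\Delta)^a_\Omega\psi\,dx = 0$ for every $v \in X_\mathcal{O}$, and the established density of traces forces $(-\Delta)^a_\Omega \psi \equiv 0$ on $\Omega \setminus \overline{\mathcal{O}}$. Choosing any nonempty open $U \Subset \Omega \setminus \overline{\mathcal{O}}$ (nonempty since $\mathcal{O} \Subset \Omega$), one has $\psi = (-\Delta)^a_\Omega\psi = 0$ on $U$, and the regional SUCP (Lemma \ref{sucpl}) yields $\psi \equiv 0$ in $\Omega$. Consequently $f = (-\Delta)^a_\Omega\psi|_{\mathcal{O}} = 0$, completing the proof.

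The principal technical obstacle I expect is the clean treatment of the auxiliary Dirichlet problem uniformly in $a \in (0,1)$, particularly at the endpoint $a = 1/2$, where one must use $\widetilde{H}^{1/2}(\mathcal{O}) = H^{1/2}_{0,0}(\mathcal{O})$ rather than $H^{1/2}_0(\mathcal{O})$ in order that $(-\Delta)^{1/4}_\Omega\psi$ belong to $L^2$ (cf.\ Lemma \ref{norm0} and its remark). A secondary point is rigorously justifying that the duality pairing $\int_\Omega v\,(-\Delta)^a_\Omega \psi\,dx$ decomposes into pointwise integrals over $\mathcal{O}$ and $\Omega \setminus \overline{\mathcal{O}}$; this is handled by the kernel formula above, which removes the singularity in the integrand for $x$ separated from $\overline{\mathcal{O}}$.
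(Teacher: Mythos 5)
Your proof is correct and follows essentially the same route as the paper's: Hahn--Banach reduction, solve the auxiliary Dirichlet exterior value problem for $(-\Delta)^a_{\Omega}$ with the orthogonal complement $f$ as source supported in $\mathcal{O}$, use symmetry of the regional bilinear form together with the density of admissible exterior data to conclude $(-\Delta)^a_{\Omega}\psi = 0$ on $\Omega\setminus\overline{\mathcal{O}}$, then invoke the regional SUCP (Lemma \ref{sucpl}). The one internal difference is that you establish coercivity by splitting the kernel and lower-bounding the cross term $\int_{\mathcal{O}}\phi(x)^2\int_{\Omega\setminus\mathcal{O}}|x-y|^{-n-2a}\,dy\,dx$ by $c_0\|\phi\|_{L^2(\mathcal{O})}^2$ (a cleaner and more self-contained argument, up to a constant $C_{n,a}$ that you dropped), whereas the paper uses the fractional Poincar\'e--Wirtinger inequality \eqref{Poincare1} on $\Omega$ combined with $|\mathcal{O}|<|\Omega|$; both yield the needed well-posedness of the auxiliary problem.
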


\begin{proof}%[Proof of Lemma \ref{runge_0}]
It is enough to show that
\begin{equation}\label{condi11}
\mbox{if } \langle w, v \rangle_{L^2(\mathcal{O})} = 0 \quad \forall v\in X_{\mathcal{O}},\quad \mbox{then } w=0 \mbox{ in }\mathcal{O}.
\end{equation}
Let us assume that, there is a $ w \in L^2(\mathcal{O})$ such that $\langle w,v\rangle_{L^2(\mathcal{O})} = 0$ for all $v \in X_{\mathcal{O}}$.
Then we consider the function $\varphi\in \widetilde{H}^{a}(\mathcal{O})$ as the unique weak solution of 
\begin{equation}\label{phiw1}\begin{aligned} (-\D)^a_{\Omega} \varphi &= w \quad\mbox{in }\mathcal{O}\\
\varphi &= 0 \quad\mbox{in }\Omega\setminus\overline{\mathcal{O}}.
           \end{aligned}\end{equation}
Now from \eqref{condi11} and \eqref{phiw1} we have
\begin{align*}
0 = \langle w,v \rangle_{L^2(\mathcal{O})} 
&= \langle (-\Delta)^a_{\Omega} \varphi, v\rangle_{L^2(\mathcal{O})}\\
&=  \langle (-\Delta)^a_{\Omega} \varphi, v\rangle_{L^2(\Omega)}-\langle (-\Delta)^a_{\Omega} \varphi, v\rangle_{L^2(\Omega\setminus\overline{\mathcal{O}})}.
\end{align*}
Thus for all $v\in X_{\mathcal{O}}$, we conclude
\begin{equation}\label{qwe1}
\begin{aligned}
 \langle (-\Delta)^a_{\Omega} \varphi, v\rangle_{L^2(\Omega\setminus\overline{\mathcal{O}})} = \langle (-\Delta)^a_{\Omega} \varphi, v\rangle_{L^2(\Omega)} = \langle\varphi, (-\Delta)^a_{\Omega}v\rangle_{L^2(\mathcal{O})}
 =0.
\end{aligned}
\end{equation} 
Since $v|_{\Omega\setminus\overline{\mathcal{O}}}\in H^a(\Omega\setminus\overline{\mathcal{O}})$ can be chosen arbitrarily, \eqref{qwe1} implies
\begin{equation}\label{nw1}
(-\Delta)^a_{\Omega}\varphi = 0 \quad\mbox{ in }\Omega\setminus\overline{\mathcal{O}}. 
\end{equation}
Therefore, we obtain that 
\[ \varphi= (-\D)^a_{\Omega} \varphi = 0 \mbox{ in }\Omega\setminus\overline{\mathcal{O}},\]
which implies that $\varphi\equiv 0 $ in $\Omega$ due to the unique continuation (cf. Lemma \ref{sucpl}) and hence $w=0$ in $\mathcal{O}$. This proves \eqref{condi11} and Lemma \ref{runge_0}.
\hfill\end{proof}
Now we prove a Runge approximation type property for the operator $\mL_{b,q}$.
%The following result provides direct recovery of the coefficients $b$ and $q$ from all measurements of the Cauchy data.
Let us recall the operator $\mL_{b,q}$ introduced in \eqref{operator}. Let $\mathcal{O} \Subset \Omega$ be a non-empty open set, compactly contained in $\Omega$, with Lipschitz boundary and consider the sets
\begin{equation*}
X:=\{(-\Delta)^{s/2}_{\Omega}v|_{\mathcal{O}}\,;\,\, \mathscr{L}_{b,q}v=0, \mbox{ in }\Omega,\,\, v|_{\Omega_e}=f,\, \forall f\in C^\infty_c(W)\}
\end{equation*}
and
\begin{equation*}
Y:=\{v|_{\Omega}\,;\,\, \mathscr{L}_{b,q}v=0, \mbox{ in }\Omega,\, v|_{\Omega_e}=f,\,\, \forall f\in C^\infty_c(W)\}
\end{equation*}
where $W$ be some non-empty open bounded subset of $\Omega_e$.
\begin{Lemma}\label{sqml}
Let $\mL_{b,q}, X,Y$ are as above. Then
\begin{enumerate}
 \item For any $F\in L^2(\mathcal{O})$ and any $\epsilon>0$ there exists $u\in X$ such that
\begin{equation*}
\|F-u\|_{L^2(\mathcal{O})}<\epsilon.
\end{equation*}
\item 
The set $Y$ is dense in $L^2(\Omega)$. 
\end{enumerate}
\end{Lemma}
\begin{proof}
(1)\,
We observe that, it is enough to prove the result for $F \in \widetilde{H}^{s}(\mathcal{O})$ for some $0<s<1$, since $\widetilde{H}^{s}(\mathcal{O})$ is dense in $L^2(\mathcal{O})$.
Let $F \in \widetilde{H}^{s}(\mathcal{O})$ such that $\langle F, \widetilde{v}\rangle_{L^2(\mathcal{O})}= 0$, for all $\widetilde{v}\in X$, then we show $F=0$ in $\mathcal{O}$. Since $\widetilde{v}=(-\Delta)^{s/2}_\Omega v|_{\mathcal{O}} \in L^2(\mathcal{O})$ for some  $v\in H^t(\mathbb{R}^n)$ solving $\mL_{b,q}v= 0 $ in $\Omega$.
Thus we get that \[\langle F , (-\Delta)^{s/2}_\Omega v\rangle_{L^2(\mathcal{O})} = 0.\]

Now extending $F$ by $0$ outside $\mathcal{O}$ we have $F \in \widetilde{H}^{s}(\Omega)$ and consequently $(-\Delta)^{s/2}_{\Omega}F \in L^2(\Omega)$ thanks to Lemma \ref{lemnew}. %It follows from the fact 
%$(-\Delta)^{s/2}_{\Omega}F +\varphi_{s/2}(x)F = (-\Delta)^{s/2}F\in L^2(\Omega)$. Furthermore recalling that $\varphi_a$ is locally Lipschitz and that $F=0$ in $\Omega\setminus \mathcal{O}$ we have $\varphi_{s/2}(x)F|_{\Omega} \in L^2(\Omega)$, which establishes our claim. 
Next we write
\[0=\langle F , (-\Delta)^{s/2}_\Omega\, v\rangle_{L^2(\mathcal{O})} = \left\langle \left( (-\Delta)^{s/2}_\Omega\, F\right) , v\right\rangle_{L^2(\Omega)}.\]
Since $(-\Delta)^{s/2}_{\Omega}F \in L^2(\Omega)$, there is $w\in H^t(\R^n)$, $0<t<1$ such that
\begin{equation*}
\mL_{b,q}w = (-\Delta)^{s/2}_{\Omega}F\quad \mbox{in } \Omega, \quad \mbox{with}\quad w=0 \mbox{ in }\Omega_e.
\end{equation*}
Therefore, we get
\begin{equation*}
0= \langle \mL_{b,q}w, v\rangle_{L^2(\Omega)} = \langle w, \mL_{b,q}v\rangle_{L^2(\Omega)} -\langle (-\Delta)^tw, v\rangle_{L^2(\Omega_e)}.
\end{equation*}
Since $\mL_{b,q}v=0$ in $\Omega$, thus
\begin{equation*}
\langle (-\Delta)^tw, f\rangle_{\Omega_e}
%=\langle (-\Delta)^tw, v\rangle_{\Omega_e} 
= 0, \quad\forall f\in C^\infty_c(W).
\end{equation*}
Hence, $(-\Delta)^tw=0=w $ in $W\subset\Omega_e$. Consequently, by the unique continuation given by Proposition \ref{sucp} we conclude that $w\equiv 0$, that $(-\Delta)^{s/2}_\Omega F = 0$ in $\Omega$. Using the unique continuation for the regional fractional Laplacian operator since $(-\D)^{s/2}_{\Omega}F = 0 = F$ in $\Omega\setminus \overline{\mathcal{O}}$, we get $F=0$ in $\Omega$ (c.f. Lemma \ref{sucpl}). Hence, part (1) follows. \\
\\
\noindent (2)\, The proof is similar to part (1). Let $G\in L^2(\Omega)$ and 
$ \langle G, v\rangle_{L^2(\Omega)}= 0$, for all $v\in Y$, then we will show that $G=0$ in $\Omega$ to prove our claim.  

Let $w\in H^t(\mathbb{R}^n)$ solves
$
\mL_{b,q}w = G$ in $\Omega$ and $w=0$ in $\Omega_e.
$
Then we have
\begin{equation*}
0= \langle \mL_{b,q}w, v\rangle_{L^2(\Omega)} = \langle w, \mL_{b,q}v\rangle_{L^2(\Omega)} -\langle (-\Delta)^tw, v\rangle_{L^2(\Omega_e)}.
\end{equation*}
Since $\mL_{b,q}v=0$ in $\Omega$, we get
\begin{equation*}
\langle (-\Delta)^tw, f\rangle_{\Omega_e}=\langle (-\Delta)^tw, v\rangle_{\Omega_e} = 0, \quad \mbox{for all } f\in C^\infty_c(W).
\end{equation*}
Hence, $(-\Delta)^tw=0=w $ in $W\subset\Omega_e$. Consequently, by the unique continuation in Proposition \ref{sucp} we have $w\equiv 0$ and therefore $G=0$ in $\Omega$. 
\hfill\end{proof}

\subsection{Proof of Theorem \ref{Main_Th_1} and \ref{Main_Th}:}\label{Sec_inv_prob}
Let us recall the integral identity from Lemma \ref{Int_id}, given by
\begin{equation}\label{newline1}
\int_{\Omega}(b_1-b_2)\left((-\D)^{s/2}_{\Omega} u_f\right) \left((-\D)^{s/2}_{\Omega}\varphi\right) + \int_{\Omega} (q_1 - q_2)\varphi\, u_f =0, \quad\forall\varphi \in C^{\infty}_c(\Omega),
\end{equation}
where $u_f \in H^t(\R^n)$ is the unique solution of the Dirichlet exterior value problem $\mL_{b_k,q_k}u_f = 0$ in $\Omega$ and $u_f = f$ in $\Omega_e$, for $k=1,2$.

Since $b_k$, $q_k$ are compactly supported in $\Omega$, let us consider a non-empty open subset $\mathcal{O}\Subset\Omega$,  with Lipschitz boundary, containing the compact supports of $b_k,q_k$ in $\Omega$, for $k=1,2$.
Now, using Lemma \ref{runge_0} we know that
\begin{equation*}
X_{\mathcal{O}}=\{\varphi|_{\mathcal{O}}:\, \varphi\in H^{s/2}(\Omega);\, (-\Delta)^{s/2}_{\Omega}\varphi=0 \mbox{ in }\mathcal{O}\}
\end{equation*}
is dense in $L^2(\mathcal{O})$. Hence, from the $L^2$-density of  $C^\infty_c(\Omega)$ in $X_{\mathcal{O}}$ and the integral identity \eqref{newline1}, we conclude   
\begin{equation*}
\int_{\mathcal{O}} (q_1-q_2)u_f\,\varphi = 0,\quad \forall \varphi\in X_{\mathcal{O}},
\end{equation*}
or equivalently
\begin{equation}\label{imp1}
(q_1 - q_2)u_f = 0 \quad \mbox{in }\mathcal{O}.
\end{equation}
Plugging \eqref{imp1} in the integral identity \eqref{newline1} we have
\begin{equation}\label{newid2}
\int_{\Omega} (b_1-b_2) \left((-\Delta)^{s/2}_{\Omega} u_f\right)\left((-\Delta)^{s/2}_{\Omega} \varphi\right) = 0, \quad \forall\varphi \in C^{\infty}_c(\Omega).
\end{equation}
Note that $\mathcal{O}\Subset\Omega$ contains the compact supports of $b_1,b_2$ in $\Omega$ and $(b_1-b_2) (-\Delta)^{s/2}_{\Omega} u_f \in L^2(\mathcal{O})$.
Hence, take 
$\varphi \in H^{s/2}(\Omega)$ to be the weak solution of (see Proposition \ref{existence_th2}) 
\begin{equation*}
\begin{aligned}
(-\Delta)^{s/2}_\Omega \varphi =& (b_1-b_2) (-\Delta)^{s/2}_{\Omega} u_f\quad \mbox{ in }\mathcal{O}\\
\varphi =& 0 \quad \mbox{on }\Omega\setminus\overline{\mathcal{O}}.
\end{aligned}
\end{equation*}
Then plugging this $\varphi$ in \eqref{newid2}, we obtain 
\begin{equation}\label{imp2}
(b_1-b_2) (-\Delta)^{s/2}_{\Omega} u_f = 0 \quad \mbox{in }\Omega.
\end{equation}
Observe the relations obtained in \eqref{imp1} and \eqref{imp2}, that is
\begin{equation}\label{identity}
(b_1-b_2) (-\Delta)^{s/2}_{\Omega} u_f = 0 = (q_1-q_2)u_f \quad \mbox{in }\Omega.
\end{equation}
Note that the above identity \eqref{identity} is true for each $f \in \widetilde{H}^t(\Omega_e)$ (and the corresponding solution $u_f \in H^t(\R^n)$) satisfying $\mathscr{N}_{b,q}(f)|_{\widetilde{W}} = \mathscr{N}_{b,q}(f)|_{\widetilde{W}}$.

\subsection*{Proof of Theorem \ref{Main_Th}}
Let us assume that the operators $\mL_{b_k,q_k}$, for $k=1,2$ satisfies the assumption \eqref{eva}.
Fix a nonzero $f \in \widetilde{H}^t(\Omega_e)$ such that \[\left(\Lambda_{b_1,q_1} - \Lambda_{b_2,q_2}\right)(f)|_{\widetilde{W}} = 0.\]
%Observe that the identity \eqref{identity} holds true for this particular $f$ and corresponding $u_f \in H^t(\R^n)$.
Since $(b_1-b_2), (q_1-q_2) \in C_c(\Omega)$, then \begin{equation}\label{bc}B=\{x\in \Omega;\, (b_1-b_2)(x)\neq 0\}\quad\mbox{ and }\quad C=\{x\in \Omega;\, (q_1-q_2)(x)\neq 0\}\end{equation} are open subsets in $\Omega$. If $B,C$ are non-empty, then from \eqref{identity}, we get $(-\Delta)^{s/2}_{\Omega}u_f$ and $u_f$  are zero on the open sets $B$ and $C$ respectively.  

From Lemma \ref{sucpl}, it is clear that $B$ and $C$ are disjoint open sets. If $B\cap C\neq \emptyset$, it will lead to $u_f\equiv 0$ in $\Omega$.
We also observe that $B$, $C$ cannot be complementary in $\Omega$, that if $C=\Omega\setminus \overline{B}$ %\overline{B}\cup C\neq\Omega$, since 
then the following exterior value problem 
\[ (-\Delta)^{s/2}_{\Omega}v= 0 \mbox{ in }B, \quad v = 0 \mbox{ in }\Omega\setminus\overline{B}\]
has only $v=0$ solution in $\Omega$ (cf. Proposition \ref{existence_th2}).
This completes the proof of Theorem \ref{Main_Th}.
\qed

\subsection*{Proof of Theorem \ref{Main_Th_1}:}
Let $\mathscr{N}_{b,q}(f)|_{\widetilde{W}} = \mathscr{N}_{b,q}(f)|_{\widetilde{W}}$, for all $f \in \widetilde{H}^t(W)$, where $W, \widetilde{W}\subset \Omega_e$ be any non-empty open sets.
Then we have the identity \eqref{identity}:
\begin{equation*}
(b_1-b_2) (-\Delta)^{s/2}_{\Omega} u_f=0 = (q_1-q_2)u_f \quad \mbox{in }\Omega,\quad \forall f\in C^\infty_c(W).
\end{equation*}
Now let us recall the approximation result of Lemma \ref{sqml}.
Since, $(b_1-b_2)$ is compactly supported inside $\Omega$, using the density result concerning $(-\Delta)^{s/2}_\Omega u_f$ in the \textit{Part (1)} of Lemma \ref{sqml} and varying  $f\in C^\infty_c(W)$ we obtain $b_1=b_2$ in $\Omega$.
Similarly by using \textit{Part (2)} of Lemma \ref{sqml} we get $q_1=q_2$ in $\Omega$.
This completes the proof of Theorem \ref{Main_Th_1}.
\qed

\section*{Acknowledgment}
\begin{small}
	\noindent G.U. was partly supported by NSF, the Si-Yuan Professorship at the IAS-HKUST and the Walker Family Endowed Professorship at the University of Washington. S.B. and T.G. were partly supported by Project no.: 16305018 of the Hong Kong Research Grant Council.
\end{small}

%\bibliographystyle{alpha}
%\bibliography{bib_frac}
%\end{document}

\end{document}